\def\phi{\varphi }
\def\epsilon{\varepsilon}
\theoremstyle{plain}
\newtheorem{theorem}{Theorem}[section]
\newtheorem{lemma}[theorem]{Lemma}
\theoremstyle{definition}
\newtheorem{definition}[theorem]{Definition}
\newtheorem{remark}[theorem]{Remark}
\newtheorem{remarks}[theorem]{Remarks}
\newtheorem{example}[theorem]{Example}
\numberwithin{equation}{section}
\begin{document}

\title{ A multivariate version of the disk convolution}
\author{Margit R\"osler\\Institut f\"ur Mathematik, Universit\"at Paderborn\\
Warburger Strasse 100,
D-33098 Paderborn\\
roesler@math.upb.de\\
and\\
Michael Voit\\
Fakult\"at Mathematik, Technische Universit\"at Dortmund\\
          Vogelpothsweg 87,
          D-44221 Dortmund, Germany\\
e-mail:  michael.voit@math.tu-dortmund.de}
\date{}

\maketitle

\begin{abstract}
We present an explicit product formula for the spherical
functions of the compact Gelfand pairs $(G,K_1)= (SU(p+q), SU(p)\times SU(q))$ with
$p\ge 2q$, which can be  considered as the elementary spherical functions of one-dimensional
$K$-type for the Hermitian symmetric spaces
$G/K$ with $K= S(U(p)\times U(q))$. Due to results of Heckman, they
can be expressed in terms of Heckman-Opdam
 Jacobi polynomials of type $BC_q$ with specific half-integer multiplicities.
 By analytic continuation with respect to the multiplicity parameters we  obtain
 positive product formulas for the extensions of these spherical functions as well as 
associated compact and commutative
hypergroup structures  parametrized by real $p\in]2q-1,\infty[$. We also obtain explicit
product formulas
    for the involved continuous two-parameter family of  Heckman-Opdam Jacobi polynomials with regular, but not necessarily positive
    multiplicities.
The results of this paper extend well known results for the disk convolutions for $q=1$ to higher rank.

\end{abstract}

\smallskip
\noindent
Key words: Hypergeometric functions associated with root systems, 
 Heckman-Opdam theory, Jacobi polynomials, disk hypergroups, positive product
formulas,
 compact Grassmann manifolds, 
 spherical functions.

\noindent
AMS subject classification (2000): 33C67, 43A90, 43A62, 33C80.


\section{Introduction}
It is well-known that
the spherical functions of Riemannian symmetric spaces of compact type
can be considered as Heckman-Opdam polynomials, which are the polynomial
variants of 
Heckman-Opdam hypergeometric functions.
In
particular, the spherical functions of Grassmann manifolds $SU(p+q,
\mathbb F)/S(U(p, \mathbb F)\times U(q,\mathbb F))$ with  $p\geq q$ and
$\mathbb F=\mathbb R,
\mathbb C, \mathbb H$  can be realized
as Heckman-Opdam polynomials of type $BC_q$
with certain mulitplicities given by the root data, namely
\[ k= (d(p-q)/2, (d-1)/2, d/2) \quad \text{with } \,d= \dim_\mathbb R \mathbb F \in\{1,2,4\},\]
corresponding to the short, long and middle roots, respectively. We
refer to  \cite{HS},
\cite{H} and
\cite{O1} for the foundations of Heckman-Opdam theory, and to 
\cite{RR} for the compact Grassmann case.  Recall that the spherical functions of a Gelfand pair $(G,K)$ can be characterized 
as the continuous, $K$-biinvariant functions on $G$ satisfying the product formula 
\begin{equation}\label{prodformel} \varphi(g)\varphi(h) = \int_K \varphi(gkh)dk\quad (g,h\in G)\end{equation}
where $dk$ is the normalized Haar measure on $K.$ 
In \cite{RR}, the
product formula for the spherical functions of  $SU(p+q,
\mathbb F)/S(U(p, \mathbb F)\times U(q,\mathbb F)),$ considered as functions on the
fundamental alcove 
\begin{equation}\label{A_q} A_q = \{t\in \mathbb R^q:\> \pi/2\ge t_1\ge
t_2\ge\ldots\ge t_q\ge0\},\end{equation}
was extended by
analytic interpolation to all real parameters  $p\in]2q-1,\infty[.$
This led to positive product formulas for the associated
$BC$-Heckman-Opdam polynomials as well as commutative hypergroups structures
on $A_q$ with the Heckman-Opdam polynomials as
characters. For a background on hypergroups see \cite{J}, where
hypergroups are called convos.
For the non-compact Grassmannians over $\mathbb R, \, \mathbb C$ and $\mathbb
H$, similar constructions had previously been
carried out in \cite{R2}. In the case $\mathbb F= \mathbb C$, where
the non-compact Grassmannian $SU(p,q)/ S(U(p)\times U(q))$ is a Hermitian
symmetric space, the approach of \cite{R2} was extended to
the spherical functions of  $U(p,q)/U(p)\times SU(q)$ in \cite{V9}. Here the
analysis was based on the decisive fact that the spherical
functions of this (non-symmetric) space are intimately related to the generalized spherical functions of
the symmetric space $SU(p,q)/ S(U(p)\times U(q))$ of
one-dimensional $K$-type. These in turn can be expressed in terms of
Heckman-Opdam hypergeometric functions according to  \cite{HS}, Section 5.


In the present paper, we consider the  pair
$(G,K_1)$ with $G=SU(p+q)$ and $K_1= SU(p)\times SU(q)$ (over
$\mathbb F=\mathbb C$), where we assume $p>q$. In this case, $(G,K_1)$ is a Gelfand pair.
By a decomposition of Cartan type,  we  identify the 
spherical
 functions of $G/K_1$ as functions on the compact cone
$$X_q:=\{(zr_1,r_2, \ldots, r_q): \quad 0\le r_1\le\ldots r_q\le 1, \>
z\in\mathbb T\}\subset \mathbb C\times\mathbb R^{q-1}$$
 with the
torus $\mathbb T:=\{z\in\mathbb C:\> |z|=1\}$. Similar as in the non-compact case \cite{V9}, 
the
spherical functions of $G/K_1$ can be considered as elementary spherical functions of $K$-type $\chi_l\, ( l\in \mathbb Z$) 
for the Hermitian symmetric space
$G/K$ with $K= S(U(p)\times U(q)).$ According to the results of Heckman \cite{HS}, 
 they  can  be expressed in terms of
 of Heckman-Opdam polynomials of type $BC_q$, depending on the
integral  parameter $p>q$ and the spectral parameter $l\in\mathbb Z$.
Under the stronger requirement $p\geq 2q$, we proceed similar as in \cite{RR} and write down the
 product formula for the spherical functions of $G/K_1$
 as  product formulas on $X_q$.
 
In Section 4 we then extend this formula to a product formula 
on $X_q$ for a continuous range of parameters $p\in] 2q-1,\infty[$    by means of
Carlson's theorem, a  principle of analytic continuation.
In particular, we obtain a continuous family of associated
commutative hypergroup structures on $X_q$.
We  determine the dual spaces and Haar measures of these hypergroups.
  For $q=1$, the space  $X_q$ is the complex unit disk,
 and the associated hypergroups are the
well-known disk hypergroups studied in \cite{AT}, \cite{Ka},
 and  \cite{BH}, where the associated product formulas are based on the work of
 Koornwinder \cite{K2}.

For  each real $p\in] 2q-1,\infty[$, the associated hypergroup structure on $X_q$ 
contains a
 compact subgroup isomorphic to the one-dimensional torus $(\mathbb T,\cdot)$. 
Moreover, the quotient
$X_q/\mathbb T$ can be identified with the alcove $A_q$ and carries 
associated  quotient convolution structures; 
see \cite{J} and \cite{V1}  for the general
background. In this way  we  in particular 
recover the above-mentioned hypergroup structures of  \cite{RR} on
$A_q$ for $\mathbb F=\mathbb C$.
More generally, 
we obtain in Section 5 explicit continuous 
product formulas and convolution structures on $A_q$ for
all Jacobi polynomials of type $BC_q$ with multiplicities
$$k=(k_1,k_2,k_3)= (p-q-l,1/2+l, 1)$$
with  $p\in] 2q-1,\infty]$ and $l\in\mathbb R$.
Unfortunately,  for general $l\ne0$, the positivity of these product formulas
remains
open. 

\medskip\noindent
\textbf{Acknowledgement.} It is a pleasure to thank Maarten van Pruijssen for valuable hints 
concerning Gelfand pairs. 

\section{Preliminaries}

We start our considerations with the compact Grassmann manifolds
$G/K$ over $\mathbb C$, where $G= SU(p+q)$ and
$K=S(U(p)\times U(q))$ with $p\geq  q\geq 1.$ From the Cartan
decomposition of $G $ (see \cite{RR} or Theorem VII.8.6 of
\cite{Hel}) it follows that a system of representatives of the
double coset space $G //K$ is given by 
the matrices
\[ a_t =
\begin{pmatrix} I_{p-q}&0&0\\0 & \cos \underline t & -\sin \underline t
 \\0 &\sin \underline t & \cos \underline t \end{pmatrix},\quad t\in A_q\]
with $A_q$ as in \eqref{A_q};
here
$\underline t$ is the $q\times q$-diagonal matrix with the components of $t$ as
entries, and
$\cos\underline t,\, \sin\underline t$ are understood componentwise. We recall
from \cite{RR} how the double coset representatives can be determined
explicitly:

For $X\in M_q(\mathbb C)$ denote by $\,\sigma_{sing}(X) =
\sqrt{\text{spec}(X^*\!X)} = (\sigma_1 \ldots, \sigma_q) \in \mathbb R^q$ the vector of
singular values of $X$, decreasingly ordered by size.
Write $g\in G$ in $(p\times q)$-block notation as
\begin{equation}\label{block} g = \begin{pmatrix} A(g) & B(g)\\
        C(g) & D(g)
       \end{pmatrix},\end{equation}
and suppose that $g \in K b_tK.$ Then
\begin{equation}\label{xspec} t =
\text{arccos}(\sigma_{sing}(D(g))).\end{equation}

 By \cite{RR}, the spherical functions of $G/K$  are 
given by Heckman-Opdam polynomials, as follows: Consider $\mathbb R^q$ 
with the standard
inner product $\langle \,.\,,\,.\,\rangle$ and denote by $F_{BC_q}(\lambda,k;t)$ 
the Heckman-Opdam hypergeometric function
associated with the root system 
\[R=2BC_q = 
\{ \pm 2e_i, \pm 4e_i, 1\leq i\leq q\}\cup\{ \pm 2e_i \pm 2e_j: 1\leq i<j\leq q\}\subset\mathbb R^q\] 
and with multiplicity parameter $k=(k_\alpha)_{\alpha \in R}$, in the notion of \cite{R2}. 
Recall that there exists an open regular set $K^{reg}\subset \mathbb C^3$  
such that 
$F_{BC_q}(\lambda,k;\,.\,)$ exists in a suitable tubular neighbourhood of 
$\mathbb R^q\subset \mathbb C^q.$ We also write
$k=(k_1, k_2, k_3)$ where $k_1, k_2, k_3$ belong to the short, long and middle roots, respectively. 
Fix the positive subsystem 
$\,R_+ = \{ 2e_i, 4e_i, 2e_i \pm 2e_j: 1\leq i<j\leq q\}.$
Writing
$\alpha^\vee:=2\alpha/\langle\alpha,\alpha\rangle$, 
the associated set of
dominant weights is   
\begin{equation}\label{dominantweights} P_+ := \{ \lambda \in \mathbb R^q: 
\langle\lambda,\alpha^\vee\rangle \in \mathbb Z_+
\,\,\forall\,\alpha\in R_+\}=\{\lambda\in 2\mathbb Z_+^q:\> \lambda_1\ge
\lambda_2\ge\ldots\ge\lambda_q\}.\end{equation}
Notice that our normalization of root sytems and multiplicities is in 
accordance with \cite{HS}, \cite{O1} but differs from 
the ``geometric'' notion of \cite{RR}, where both are rescaled by a factor $2$. 

We consider the renormalized Heckman-Opdam polynomials associated with $R$ and $k$ in 
trigonometric notion as in \cite{RR},  which are defined by
\begin{equation}\label{HOrenorm} R_\lambda(k;t) = F_{BC_q}(\lambda + \rho(k),k;it), \quad 
\lambda \in P_+ \,.\end{equation}
Here
\[ \rho(k) = \frac{1}{2}\sum_{\alpha \in R_+} k_\alpha \alpha\]
and
\[
c(\lambda,k) = \prod_{\alpha\in R_+}
\frac{\Gamma\bigl(\langle\lambda,\alpha^\vee\rangle +
\frac{1}{2}k_{\alpha/2}\bigr)}{\Gamma \bigl(\langle\lambda,\alpha^\vee\rangle +
\frac{1}{2}k_{\alpha/2} + k_\alpha\bigr)}\cdot \prod_{\alpha\in R_+} 
\frac{\Gamma
\bigl(\langle\rho(k),\alpha^\vee\rangle + \frac{1}{2}k_{\alpha/2} +
k_\alpha\bigr)}{\Gamma\bigl(\langle\rho(k),\alpha^\vee\rangle +
\frac{1}{2}k_{\alpha/2}\bigr)}\,\]
is the generalized $c$-function, with the convention 
 $\,k_{\alpha/2} := 0$ if $\,\alpha/2\notin R$. 
 The Heckman-Opdam polynomials are holomorphic on $\mathbb C^q$. Indeed, $F_{BC_q}(\lambda + \rho(k),k;\,.\,)$ is 
 holomorphic on all of $\mathbb C^q$ if and only if $\lambda \in P_+$, see \cite{HS}.
 
According to Theorem 4.3. of \cite{RR}, the spherical functions of $G/K=SU(p+q)/S(U(p)\times U(q))$ 
can be considered as trigonometric polynomials on the alcove $A_q\subset \mathbb R^q$ and are  given by
\[ \varphi_\lambda (t) = R_\lambda(k;t), \quad \lambda \in P_+\]
with the multiplicity \[k = (k_1, k_2, k_3) = (p-q, 1/2, 1).\]
For  fixed $q\ge1$ and $p\geq 2q$,   the  product formula
 (\ref{prodformel}) for the $\varphi_\lambda$ was written in  \cite{RR}
 as a product formula 
  on $A_q$ depending on  $p$
 in a way which allowed extension to all 
 real parameters $p> 2q-1$ by  analytic continuation; see Theorem 4.4. of \cite{RR}.

\section{Spherical functions of  $(SU(p+q),
SU(p)\times SU(q))$ and their product formula}

Let us turn to the pair $(G, K_1):=(SU(p+q), SU(p)\times SU(q)), $ where we assume $p>q$. In this case,
$(G,K_1)$ is a Gelfand pair according to the classification of \cite{Kr}. 
In this section, we derive an explicit product formula for the spherical functions of $(G, K_1)$.
First, we determine a
system of 
representatives for the double coset space $G//K_1$.
For this, consider the compact set
\[ X_q := \{x= (zr_1,r_2,\ldots, r_q): z\in \mathbb T, \,r_i\in \mathbb R \text{
with }0\leq r_1 \leq \ldots \leq r_q \leq 1\}\subseteq \mathbb C\times
\mathbb R^{q-1}.\]
If $q=1$, this is just the closed unit disc in $\mathbb C$. For
$q\geq 2$, the set $X_q$ can be interpreted as  a cone of real dimension $q+1$
with $X_{q-1}$ as basis set.

For $x=(zr_1, r_2, \ldots, r_q) \in X_q$ we write 
\[x=[r,z] \,\,\text{ with } \,r= (r_1, \ldots, r_q) .\]  
Note that the phase factor $z\in
\mathbb T$ is  arbitrary if $r_1 = 0.$
Now consider the alcove $A_q$. We define an equivalence relation on $A_q\times
\mathbb T$ via
\[ (t,z) \sim (t^\prime, z^\prime ) :\Longleftrightarrow\, t= t^\prime \,\text{
and }\, z = z^\prime \text{ if }\, t_1 = t_1^\prime <\frac{\pi}{2}.\]
Then the mapping $\, A_q\times \mathbb T \,\to X_q\,, \, (t,z) \mapsto [\cos t, z]$ induces a homeomorphism 
between the quotient space $(A_q\times \mathbb T)/\sim\,$ and the cone $X_q$.



\begin{lemma} 
 For $z\in \mathbb T$, let $h(z) = \text{diag}(z, 1\ldots, 1) \in
M_q(\mathbb C).$ Then a set of
representatives of $G// K_1$ is given by the matrices $b_x, x\in X_q$
with 
\begin{equation}\label{b-x-z-def}
b_{x} =  \begin{pmatrix} I_{p-q}&0&0\\0 &  h(z^{-1})\,\cos\underline  t
\,&
-\sin\underline t
 \\0 & \sin\underline t &  h(z)\cos \underline t\,\end{pmatrix} \text { for }\,x
= [\cos t, z] \,\text{ with } t\in A_q.
\end{equation}
In particular, the double coset space $G//K_1$ is naturally homeomorphic with
the cone $X_q$.
\end{lemma}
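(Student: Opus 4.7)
My plan is to leverage the Cartan decomposition for $(G,K)$ recalled in Section~2 together with the structural fact that $K/K_1 \cong \mathbb{T}$. Since the $K$-double coset of $g\in G$ is determined by $\sigma_{\mathrm{sing}}(D(g))$, and $K_1$ is a normal subgroup of $K$ with quotient $\mathbb{T}$ (as one sees from the determinant character $\mathrm{diag}(A,B)\mapsto\det A$), each $K$-double coset should split into a circle's worth of $K_1$-double cosets parametrized by the phase of $\det D(g)$. The representatives $b_x$ realize precisely this parametrization: the vector $\cos t$ encodes the singular values and $z$ encodes the phase.

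For the uniqueness/injectivity direction, I would first verify that both the singular values of $D(g)$ and the scalar $\det D(g)$ are $K_1$-bi-invariants. Indeed, for $k_i=\mathrm{diag}(A_i,B_i)\in K_1$ one has $D(k_1 g k_2)=B_1 D(g)B_2$ with $B_1,B_2\in SU(q)$, and this transformation preserves both singular values and determinant. A direct computation shows $D(b_x)=h(z)\cos\underline{t}$, whose singular values are the $\cos t_i$ and whose determinant is $z\prod_i\cos t_i$. These invariants therefore separate the $b_x$ exactly up to the equivalence on $X_q$ that collapses the phase $z$ when $\cos t_1=0$.

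For the existence direction, I would apply the $(G,K)$-Cartan decomposition to write $g = m_1 a_t m_2$ with $m_i\in K$ and $t\in A_q$ determined by $D(g)$ via \eqref{xspec}. Using the section
\[ h_\zeta := \mathrm{diag}\bigl(I_{p-q},\, h(\zeta),\, h(\zeta^{-1})\bigr) \in K \]
of the determinant character $K\to\mathbb{T}$, one has $K = K_1 T = T K_1$ for $T:=\{h_\zeta:\zeta\in\mathbb{T}\}$, so that $m_1 = k_1 h_{\zeta_1}$, $m_2 = h_{\zeta_2} k_2$ with $k_i\in K_1$ and $\zeta_i\in\mathbb{T}$. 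The problem thus reduces to showing $h_{\zeta_1} a_t h_{\zeta_2} \in K_1 b_{[\cos t, z]} K_1$ for $z = (\zeta_1\zeta_2)^{-1}$; this identification of $z$ is forced by comparing $\det D$ on both sides.

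The main obstacle is this last collapse of the two-parameter family $h_{\zeta_1} a_t h_{\zeta_2}$ to the one-parameter family $b_x$. I would proceed via three ingredients: (i) the normality of $K_1$ in $K$, which lets one move $T$-factors freely past $K_1$-factors; (ii) the identity $h_\zeta a_t h_\zeta^{-1} \in K_1 a_t K_1$, proved by explicitly constructing block-diagonal $K_1$-elements whose phase twists in the $q\times q$ blocks cancel the factors $h(\zeta^{\pm 2})$ that appear in the off-diagonal blocks of $h_\zeta a_t h_\zeta^{-1}$; crucially, the resulting determinants are absorbed inside the $(p-q)\times(p-q)$ sub-block of $SU(p)$, which is exactly where the hypothesis $p>q$ is used; and (iii) a short explicit $K_1$-calculation checking $a_t h_{z^{-1}} \in K_1 b_{[\cos t, z]} K_1$. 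Combining these via
\[ h_{\zeta_1} a_t h_{\zeta_2} = \bigl(h_{\zeta_1} a_t h_{\zeta_1}^{-1}\bigr)\cdot h_{\zeta_1\zeta_2} \]
then yields the desired membership. Step (ii) is the technical core: it is the statement that torus conjugation of $a_t$ stays in the same $K_1$-double coset, which is what truly collapses the two torus parameters into a single phase $z$. The resulting continuous bijection $X_q\to G//K_1$, $x\mapsto K_1 b_x K_1$ is then automatically a homeomorphism because both spaces are compact Hausdorff.
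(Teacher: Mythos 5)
Your argument is correct and takes essentially the same route as the paper's: both rest on the $(G,K)$-Cartan decomposition, absorb the residual torus phases into the $(p-q)\times(p-q)$ block of $SU(p)$ (which is exactly where the hypothesis $p>q$ enters), and separate double cosets via the $K_1$-bi-invariants $\sigma_{sing}(D(g))$ and $\Delta(D(g))$. The only difference is organizational --- the paper performs the phase absorption in a single explicit matrix identity, whereas you split it into a $T$-conjugation step and a $T$-translation step glued together by normality of $K_1$ in $K$; your steps (ii) and (iii) do check out when written out explicitly.
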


\begin{proof}
We first check that each double coset has a representative of the stated
form.
In fact, by the known Cartan decomposition of $G$ with respect to $K$,   each $g\in G$
can be  written as
\[g=\begin{pmatrix} u_1&0\\0&v_1\end{pmatrix}  a_t
\begin{pmatrix} u_2&0\\0&v_2\end{pmatrix}\]
with $t\in A_q$ and 
$u_i\in U(p)$, $v_i\in U(q)$ satisfying $\det(u_i)\cdot \det(v_i) = 1.$
As $U(q) = SU(q) \rtimes H_q$ with $H_q = \{ h(z): z\in \mathbb T\}$, there are
$z_1, z_2 \in \mathbb T$ such
that $ \widetilde v_1 := v_1h(z_1)^{-1}\in SU(q)$ and $\, \widetilde v_2 :=
h(z_2)^{-1}v_2\in SU(q).$
For $z,w\in \mathbb T,$ define
 $\, H(w,z)= \text{diag}(w,1, \ldots, 1, z, 1, \ldots, 1)
\in M_{p}(\mathbb C)$ with the entry $z$ in position $p+1$.
Put $\,\widetilde u_1 := u_1\cdot H(z_1/z_2,z_2)$ and $\,\widetilde
u_2 := H(z_2/z_1, z_1)\cdot u_2\in U(p).$ Then 
$\widetilde u_1, \widetilde u_2 \in SU(p)$, and 
a short calculation in $p\times q$-blocks
gives
\begin{align*} g \,=\,& \begin{pmatrix}
u_1 & 0 \\
0 & v_1 \end{pmatrix}
\begin{pmatrix}
 I_{p-q}&0&0\\0 & \cos \underline t & -\sin \underline t
 \\0 &\sin \underline t & \cos \underline t \,
\end{pmatrix}
\begin{pmatrix} u_2 & 0 \\
              0 & v_2 
\end{pmatrix} \\
\,=\, & \begin{pmatrix}
\widetilde u_1 & 0 \\
0 & \widetilde v_1 \end{pmatrix}
\begin{pmatrix}
 I_{p-q}&0&0\\0 & \,h(z_1)^{-1}h(z_2)^{-1} \cos \underline t\, & \,-\sin
\underline t
 \\0 &\,\sin \underline t \,& \, h(z_1)h(z_2)\cos \underline t \,
\end{pmatrix}
\begin{pmatrix} \widetilde u_2 & 0 \\
              0 & \widetilde v_2
\end{pmatrix}.
\end{align*}
Thus $g\in K_1 b_x K_1$ with $x= [\cos t, z_1z_2]$.

In order to show  that the $b_{x}$ are contained in different double cosets
 for different $x,$ we analyze how the parameter $x$ depends on $g\in G$. Let
\[g = \begin{pmatrix} \widetilde u_1 & 0 \\
       0 & \widetilde v_1 
      \end{pmatrix} b_{x}
\begin{pmatrix} \widetilde u_2 & 0 \\
       0 & \widetilde v_2
      \end{pmatrix}  \]
with  $\widetilde u_i \in SU(p), \,\widetilde v_i \in SU(q).$ Suppose that $x= [r,z]$ with $r= \cos t, \, t\in A_q$. Using the
$(p\times q)$-block notation \eqref{block}, we obtain
\[ D(g)  = \widetilde v_1 h(z) \underline r \,\widetilde v_2\,, \quad
 \Delta(D(g)) = z\cdot\prod_{i=1}^q r_i\,= \, z\cdot |\Delta(D(g))|,\]
 Here and throughout the paper, $\Delta$ denotes the usual determinant of a complex square matrix.
 Thus $\, r= \cos t\, = \sigma_{sing}(D(g)).$ Further, $z$ is uniquely determined by $g$ exactly if $r_1\not=0$,
 which is equivalent to $\prod_{i=1}^q r_i \not = 0. $ In this case, $ \, z= \frac{\Delta D(g)}{|\Delta D(g)|}.$
Thus $(t,z)\in A_q\times \mathbb T$ is uniquely determined  by $g$ up
to the equivalence $\sim$, which proves our statement.

\end{proof}





The proof of the above lemma reveals  the following equivalence for $g\in G$: 
\[ g\in K_1b_x K_1 \, \Longleftrightarrow \, x = [r,z]\,\text{ with }\, r= \sigma_{sing}(D(g)), \, 
z = \arg\Delta(D(g)),\]
with the argument $\,\arg:\mathbb C \to \mathbb T$  defined by 
\[ \arg\,z := \frac{z}{|z|}\, \text{ if } z\not = 0, \,\, \arg\,0\,:= 1.\]

  We are now going to write the general product formula (\ref{prodformel}) 
for spherical functions as a product formula on the cone $\,X_q\cong (A_q\times \mathbb T)/\sim\,.$ 
For $x = [\cos  t, z_1], \, x^\prime = [ \cos t^\prime, z^\prime]\in X_q$ and 
$K_1$-biinvariant $f\in C(G)$ we have to evaluate the integral 
\[\int_{ K_1} f(b_{x}kb_{x^\prime})\,dk.\]
Write 
$\,\displaystyle k= \begin{pmatrix} u & 0\\
0 & v     \end{pmatrix}\, $ with $u\in SU(p), \, v\in SU(q).$
Then $(p\times q)$-block calculation gives 
\[b_{x}kb_{x^\prime} =\begin{pmatrix} * & *\\
* & D(b_{x}kb_{x^\prime}) \end{pmatrix}\]
where
\[
 D(b_{x}kb_{x^\prime}) = 
(0,\sin\underline t\,)\,u \begin{pmatrix}0\\ -\sin\underline t^\prime\end{pmatrix}
+h(z) \cos \underline t \,v\, h(z^\prime)\cos \underline t^\prime\,  .\]
With the block matrix 
 \[\sigma_0 := \begin{pmatrix}0_{(p-q)\times q}\\ I_q\end{pmatrix} \in M_{p,q}(\mathbb C)\]
this can be written as 
\begin{equation}\label{D(.)}
 D(b_{x} k b_{x^\prime}) = \,-\sin\underline t \,\sigma_0^* u
\sigma_0\sin\underline t^\prime \,+\,
h(z z^\prime)\,\cos \underline t\, v\, \cos \underline t^\prime .\end{equation}
Regarding
 $K_1$-biinvariant  functions  $f\in C(G)$ as  continuous
functions  on $X_q$, we have
\[
\int_{K_1} f(b_{x}kb_{x^\prime})dk\,=\,
\int_{ SU(p)\times SU(q)} f\left([\sigma_{sing}(D(b_xkb_{x^\prime})),
\arg\Delta(D(b_{x}k b_{x^\prime})\,]\right)dk
\]
with $D(b_{x}kb_{x^\prime})$ from \eqref{D(.)}.
 Notice that 
$\,\sigma_0^* u \sigma_0\in M_q(\mathbb C)$ is 
 the lower right $q\times q$-block of $u$ and is contained in the closure of the ball
$$B_q:=\{w\in M_q(\mathbb C):\> w^*w\le I_q\},$$
where $w^*w\le I_q$ means that $I_q-w^*w$ is positive semidefinite. 
We now assume that $p\geq 2q$ and reduce the $SU(p)$-integration by means of Lemma 2.1 of \cite{R2}. Notice first that for
continuous $g$ on $\overline B_q$,
 \[\int _{SU(p)} g(\sigma_0^*u\sigma_0) du = \int _{U(p)} g(\sigma_0^*u\sigma_0) du,\]
where $du$ denotes the normalized Haar measure in each case.  Thus Lemma 2.1 of \cite{R2} gives
 
\begin{align}
\int_{K_1} f(b_{x}kb_{x^\prime})dk&\,=\,\frac{1}{\kappa_p}\int_{B_q}\int_{SU(q)} 
f\bigr(\big[\sigma_{sing}(-\sin\underline t \,w\sin\underline t^\prime +
h(zz^\prime)\cos \underline t\, v\cos \underline t^\prime
), \notag \\
&\arg\Delta(-\sin\underline t \,w \sin\underline t^\prime +
h(zz^\prime)\cos \underline t\, v \cos \underline t^\prime
)\big]\bigr)
\Delta(I_q-w^*w)^{p-2q} dv dw,
\end{align}
 where
\[
\kappa_p=\int_{B_q}\Delta(I_q-w^*w)^{p-2q}\> dw
\]
 and 
 $dw$ means integration with respect to Lebesgue measure on $B_q$. After the substitution
$w\mapsto
 h(zz^\prime)w$, we finally arrive at the following

\begin{theorem}\label{prop-group-torus-convo-Xq}
Suppose that $p\ge 2q$. Then the product formula for the spherical functions of the Gelfand pair 
$(G,  K_1)$, considered 
as functions on the cone $X_q$, can be written as
\begin{align}\label{group-torus-convo}
\phi([\cos t,z])\phi([\cos t^\prime, z^\prime]) \,=\, 
\frac{1}{\kappa_p}&\int_{B_q}\int_{SU(q)} 
\phi\bigr(\big[\sigma_{sing}(-\sin\underline t \,w\sin\underline t^\prime +
\,\cos \underline t\,v\cos \underline t^\prime),\notag  \\
zz^\prime\cdot \arg\Delta(&-\sin\underline t \,w\sin\underline t^\prime +
\cos \underline t\, v \cos \underline t^\prime)\big]\bigr)\cdot
\Delta(I_q-w^*w)^{p-2q}\, dvdw.\notag
\end{align}
\end{theorem}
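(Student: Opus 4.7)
The plan is to trace through the computation already outlined in the paragraphs preceding the theorem and package it cleanly as a proof. The starting point is the abstract product formula \eqref{prodformel} applied to the Gelfand pair $(G,K_1)$ and evaluated on the double coset representatives $b_x$, $b_{x^\prime}$ from the lemma. Since $\phi$ is $K_1$-biinvariant, it factors through the homeomorphism $G//K_1 \cong X_q$, and by the equivalence derived immediately after the lemma, the value of $\phi$ on any $g\in G$ is determined by the pair $(\sigma_{sing}(D(g)),\,\arg\Delta(D(g)))$.

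First I would parametrize $k\in K_1$ as $k=\mathrm{diag}(u,v)$ with $u\in SU(p)$, $v\in SU(q)$, and split the Haar measure as $dk = du\, dv$. The key calculation is to read off the lower right $p\times q$ block of $b_x k b_{x^\prime}$: this is precisely formula \eqref{D(.)}, which only depends on $u$ through the $q\times q$ block $\sigma_0^* u\,\sigma_0$. Substituting this into the $K_1$-integral gives, after identifying $\phi$ with a continuous function on $X_q$,
\[
\phi([\cos t,z])\phi([\cos t^\prime,z^\prime]) \,=\, \int_{SU(p)\times SU(q)} \phi\bigl([\sigma_{sing}(D(b_xkb_{x^\prime})),\,\arg\Delta(D(b_xkb_{x^\prime}))]\bigr)\,du\,dv.
\]
Next I would apply Lemma 2.1 of \cite{R2}, which (for $p\ge 2q$) reduces integration of a function of $\sigma_0^* u\,\sigma_0$ against Haar measure on $U(p)$ to integration of $w\in B_q$ against the density $\kappa_p^{-1}\Delta(I_q-w^*w)^{p-2q}\,dw$; the transition from $SU(p)$ to $U(p)$ is harmless because the integrand depends only on the block $\sigma_0^*u\sigma_0$, which is invariant under multiplying $u$ by a central phase. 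After this step the integrand becomes exactly the expression in \eqref{D(.)} with $\sigma_0^* u\sigma_0$ replaced by $w$.

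Finally, to clean up the joint $z z^\prime$-dependence I would perform the substitution $w\mapsto h(zz^\prime)w$ on $B_q$. Since $h(zz^\prime)$ is unitary, the Jacobian is $1$ and the weight $\Delta(I_q-w^*w)^{p-2q}$ is invariant. Under this substitution, the term $h(zz^\prime)\cos\underline t\, v\cos\underline t^\prime$ in \eqref{D(.)} becomes $\cos\underline t\, v\cos\underline t^\prime$ after pulling $h(zz^\prime)$ out of $\Delta$ (yielding the factor $zz^\prime$ in the $\arg\Delta$ slot) and out of $\sigma_{sing}$ (which is unchanged under unitary multiplication on the left). This produces the stated formula.

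The main conceptual obstacle is the proper bookkeeping of the phase factor $z\in\mathbb T$: one must keep track of where the unitary $h(zz^\prime)$ can be absorbed (inside $\sigma_{sing}$, trivially; inside $\arg\Delta$, producing the prefactor $zz^\prime$), and one must verify that the $r_1=0$ ambiguity in the representation $x=[r,z]$ causes no problem because in that case $\Delta(D(\cdot))=0$, so $\arg\Delta$ is set to $1$ by convention and any phase assignment gives the same value under $\phi$. The technical obstacle is the correct invocation of Lemma 2.1 of \cite{R2}, which forces the hypothesis $p\ge 2q$ so that the density $\Delta(I_q-w^*w)^{p-2q}$ is well defined and integrable on $B_q$.
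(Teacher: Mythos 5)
Your proposal follows essentially the same route as the paper, which proves this theorem in the text immediately preceding it: evaluate \eqref{prodformel} on the representatives $b_x$, $b_{x^\prime}$, read off the block $D(b_xkb_{x^\prime})$ as in \eqref{D(.)}, reduce the $SU(p)$-integral to an integral over $B_q$ via Lemma 2.1 of \cite{R2}, and remove the phase $h(zz^\prime)$ by the unitary substitution $w\mapsto h(zz^\prime)w$. The bookkeeping of $\sigma_{sing}$, $\arg\Delta$ and the degenerate case $r_1=0$ is handled correctly.

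One justification is wrong, however: you argue that the passage from $\int_{SU(p)}$ to $\int_{U(p)}$ is harmless because $\sigma_0^*u\sigma_0$ is ``invariant under multiplying $u$ by a central phase.'' It is not: for $\zeta\in\mathbb T$ one has $\sigma_0^*(\zeta u)\sigma_0=\zeta\,\sigma_0^*u\sigma_0$, so the lower right $q\times q$ block scales with the phase. The correct reason the identity
\[\int_{SU(p)}g(\sigma_0^*u\sigma_0)\,du=\int_{U(p)}g(\sigma_0^*u\sigma_0)\,du\]
holds is that one can disintegrate the Haar measure of $U(p)$ as $u=u_0\,d(\zeta)$ with $u_0\in SU(p)$ and $d(\zeta)={\rm diag}(\zeta,1,\dots,1)$, placing the compensating phase in the first coordinate; since $p>q$, this position lies outside the lower right $q\times q$ block, so $\sigma_0^*(u_0d(\zeta))\sigma_0=\sigma_0^*u_0\sigma_0$ and the $\zeta$-integration is trivial. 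This is a local repair and does not affect the rest of your argument.
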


We remark that 
for $p=2q-1$, a degenerate version of this integral formula may be written down by using the coordinates introduced  
in Section 3 of \cite{R1}.


\medskip

We next turn to the classification of the  spherical functions of  $(G, K_1)$. 
 Note first that $K = K_1\rtimes H$ with $H = \{ H_z, z\in \mathbb T\}$ where $H_z$ is 
 the diagonal matrix with entries $z$ in position $p-q+1$, $1/z$ in position $p+1$ and $1$ else.  
Let $\chi:K \to \mathbb T$ be the homomorphism
 with kernel $ K_1$ and $\chi(H_z):= z.$ 
 Then the characters of $K$ are given by the functions $\chi_l(k) = \chi(k)^l, \,l\in \mathbb Z$, and we have the following characterization:
\begin{lemma}\label{euiv-def-spher}
 For $\phi\in C(G)$ the following properties are equivalent: \parskip=-1pt
\begin{enumerate}\itemsep=-1pt
\item[\rm{(1)}] $\phi$ is $K_1$-spherical, i.e., $K_1$-biinvariant
with 
 $\phi(g)\phi(h)=\int_{K_1} \phi(gkh) dk\,$ for all $g,h\in G$.
\item[\rm{(2)}] $\phi$ is an elementary
 spherical function for $(G,K)$ of type $\chi_l$  for some $l\in\mathbb Z$, i.e. 
 $\phi$ is not identical zero and satisfies the twisted product formula
 \begin{equation}\label{twist-prod}
\phi(g)\phi(h)=\int_{ K} \phi(gkh)\chi_l(k)dk \quad\text{ for all }\,
g,h\in G.
\end{equation}
 
\end{enumerate}
\end{lemma}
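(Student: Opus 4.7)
The strategy in both directions is to realize $\phi$ as a normalized matrix coefficient $\phi(g)=\langle\pi(g)v,v\rangle$ of an irreducible unitary representation $\pi$ of $G$ on some Hilbert space $V_\pi$, and to exploit that, since $(G,K_1)$ is a compact Gelfand pair, the subspace $V_\pi^{K_1}$ of $K_1$-fixed vectors is at most one-dimensional. The whole argument then hinges on comparing two orthogonal projections onto $V_\pi^{K_1}$: the ``group'' projection $P^{K_1}=\int_{K_1}\pi(k_1)\,dk_1$, which governs the $K_1$-product formula in (1), and the character projection $P_l=\int_K\pi(k)\overline{\chi_l(k)}\,dk$ onto the $\chi_l$-isotype of $\pi|_K$, which governs (\ref{twist-prod}).

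For (1)$\Rightarrow$(2) I invoke the standard realization of a $K_1$-spherical function on a compact Gelfand pair as such a matrix coefficient with $v$ a unit $K_1$-fixed vector and $V_\pi^{K_1}=\mathbb{C}v$. Since $K=K_1\rtimes H$, the subgroup $K_1$ is normal in $K$, so $K$ preserves the line $\mathbb{C}v$ by a unitary character trivial on $K_1$; this character factors through $K/K_1\cong\mathbb{T}$ and equals $\chi_l$ for a unique $l\in\mathbb{Z}$, i.e.\ $\pi(k)v=\chi_l(k)v$. Consequently $V_\pi^{\chi_l}=V_\pi^{K_1}=\mathbb{C}v$ and $P^{K_1}=P_l$, so that the identity $P^{K_1}\pi(h)v=\phi(h)v$ yields
\[
\phi(g)\phi(h)=\langle\pi(g)P_l\pi(h)v,v\rangle=\int_K\phi(gkh)\overline{\chi_l(k)}\,dk,
\]
which is (\ref{twist-prod}) after replacing $l$ by $-l$.

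For (2)$\Rightarrow$(1), I first derive the $K$-bi-equivariance $\phi(kgk')=\chi_l(k)^{-1}\chi_l(k')^{-1}\phi(g)$ for $k,k'\in K$ and $g\in G$, by substituting $g\mapsto gk_0$ (resp.\ $h\mapsto k_0 h$) in (\ref{twist-prod}), using invariance of Haar measure on $K$, and cancelling a nonzero value of $\phi$. This immediately implies $K_1$-biinvariance. Next, I realize $\phi$ as a matrix coefficient $\phi(g)=\langle\pi(g)v,v\rangle$ of an irreducible $\pi$ with $\pi(k)v=\chi_l(k)^{-1}v$ and with $V_\pi^{\chi_l^{-1}}$ one-dimensional; this is the classical representation-theoretic characterization of elementary spherical functions of type $\chi_l^{-1}$ on compact groups, which follows from Peter--Weyl together with (\ref{twist-prod}). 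Then $v\in V_\pi^{K_1}$, and the Gelfand property forces $V_\pi^{K_1}=\mathbb{C}v=V_\pi^{\chi_l^{-1}}$, so that $P^{K_1}=P_{-l}$ as well. Running the projection calculation in reverse produces the $K_1$-product formula of (1).

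The main conceptual obstacle is the invocation in (2)$\Rightarrow$(1) of the matrix-coefficient realization of nonzero continuous solutions of the twisted product formula; this is the only step in the argument that is not a direct calculation. Once granted, the whole equivalence collapses to the Gelfand-pair identity $V_\pi^{K_1}=V_\pi^{\chi_l^{-1}}$ together with the interchangeability of the two integral expressions $\int_{K_1}\pi(k_1)\,dk_1$ and $\int_K\pi(k)\overline{\chi_l(k)}\,dk$ for the single orthogonal projection $P^{K_1}$.
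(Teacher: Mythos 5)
The paper offers no proof of its own here: it simply remarks that the argument ``can be carried out precisely as in Lemma 2.3 of \cite{V9}'', where the analogous statement for the non-compact pair is obtained by direct computation with the integrals, based on the decomposition $K=K_1\rtimes H$ and the homogeneity $\phi(H_zg)=\phi(gH_z)=z^{-l}\phi(g)$ of a $K_1$-spherical function under the torus $H\cong K/K_1$. Your argument is correct but takes a genuinely different, purely representation-theoretic route: you realize $\phi$ as a matrix coefficient $\langle\pi(g)v,v\rangle$ of an irreducible representation and reduce both functional equations to the single identity between the projections $\int_{K_1}\pi(k)\,dk$ and $\int_K\pi(k)\overline{\chi_l(k)}\,dk$, which holds because $K_1\trianglelefteq K$ forces $K$ to act on the line $V_\pi^{K_1}$ through a character of $K/K_1\cong\mathbb T$, while the Gelfand property of $(G,K_1)$ makes $V_\pi^{K_1}$ at most one-dimensional. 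All the ingredients you invoke are legitimate for compact $G$: every nonzero continuous solution of the (twisted) product formula on a compact group is such a matrix coefficient, by Peter--Weyl applied to the algebra $e_{\chi}\ast C(G)\ast e_{\chi}$. This is the step you rightly flag as the only non-computational one; it is classical, but it is also exactly where compactness enters, and it is the reason the computational organization of \cite{V9} is needed in the non-compact setting, where spherical functions need not be positive definite. What your approach buys is a short conceptual proof in which the role of $l$ is transparent (it labels the character by which $K$ acts on the $K_1$-fixed line); the sign discrepancy you note in (1)$\Rightarrow$(2), where the construction naturally produces type $\chi_{-l}$, is harmless since (2) only asserts existence of some $l\in\mathbb Z$.
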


\begin{remark}
We here adopt the  notion of elementary spherical functions of type $\chi_l$ 
according to \cite{HO}. Each such function automatically satisfies $\varphi (e) = 1$ as well as
the 
 $\chi_l$-bi-coinvariance condition
\begin{equation}\label{twist} \varphi(k_1gk_2) = \chi_l(k_1k_2)^{-1} \cdot \varphi(g) \quad \text {for all }\, g\in G, 
k_1, k_2 \in K,\end{equation}
 see Lemma 3.2. of \cite{HO}.
The discussion of \cite{HS} is based on a different, but equivalent definition of 
elementary spherical functions of $K$-type (for the non-compact dual), which requires  \eqref{twist} together 
with a  system of invariant differential operators on sections in an associated homogeneous line bundle,
see Definition 5.2.1 of \cite{HS}.  For the equivalence of definitions we refer to
Theorem 3.2 of \cite{S}.
\end{remark}

\begin{proof}[Proof of Lemma \ref{euiv-def-spher}.] The proof of this result, which we expect to be well-known,  can be carried out
 for instance precisely as in Lemma 2.3 of \cite{V9}. 
\end{proof}

For an arbitrary irreducible Hermitian symmetric space and its compact dual, 
the elementary spherical functions of type $\chi_l$ can be written as
modifications of Heckman-Opdam hypergeometric functions,
see Section 5  of \cite{HS}, in particular Theorem 5.2.2 and Corollary 5.2.3., as well as \cite{HO}.
In the compact case, they correspond to the $\chi_l$-spherical representations of $G$ which were classified in \cite{Sch}. 
To become explicit in the particular case of our compact symmetric spaces $G/K= SU(p+q)/S(U(p)\times U(q)),$ 
recall the set  $P_+$ of dominant weights from \eqref{dominantweights}
as well as the renormalized Heckman-Opdam polynomials $R_\lambda$ of type $BC_q$ from \eqref{HOrenorm}.
According to \cite{HS}, the elementary spherical functions of $(G,K)$ of type $\chi_l$,  
considered as functions on $A_q$, are given by 
\begin{equation}\label{chispher}t\mapsto  \prod_{j=1}^q \cos^{|l|}\! t_j\cdot
 R_\lambda(k(p,q,l);t),  \quad\lambda\in P_+\,,\, l\in\mathbb Z\end{equation}
with multiplicity parameters
\[k(p,q,l):=(p-q-|l|, 1/2+|l|, 1), \quad l\in \mathbb Z.\]
 Notice at this point that for $F_{BC_q}$, the set
\[ \{ k= (k_1,k_2,k_3): \text{Re}\, k_3 \geq 0, \,\text{Re}(k_1+k_2) \geq 0\}\]
is contained in $K^{reg}$ (see Remark 4.4.3 of \cite{HS}), and so in particular
the multiplicities $k(p,q,l)$ are regular. The associated $\rho$-function is
\begin{equation*}
\rho(k(p,q,l))=
(p-q +|l|+1)\sum_{j=1}^q e_j\,+ 2 \sum_{j=1}^q (q-j)e_j.
\end{equation*}
We now turn to the spherical functions of  $(G,K_1)$, which we again consider 
as functions on the cone $X_q$.

\begin{theorem}\label{classification-spher-allg} Let $(G,K) = (SU(p+q),S(U(p)\times U(q)))$ and $K_1 = SU(p)\times SU(q).$ 
Then the spherical functions of the Gelfand pair 
$(G, K_1)$, considered as functions on $X_q$, are precisely given
by
\begin{equation}\label{def-spherical-jacobi}
\phi_{\lambda,l}^p([\cos t,z])=z^l\cdot\prod_{j=1}^q \cos^{|l|}\! t_j\cdot
 R_\lambda(k(p,q,l);t),  \quad\lambda\in P_+\,,\, l\in\mathbb Z
\end{equation}
 with the multiplicity
$k(p,q,l)=(p-q-|l|,\> \frac12 +|l|,\> 1)\in K^{reg}.$
 
\end{theorem}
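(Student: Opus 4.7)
The plan rests on Lemma~\ref{euiv-def-spher}, which identifies the $K_1$-spherical functions on $G$ with the elementary spherical functions of $(G,K)$ of type $\chi_l$ for some $l\in\mathbb Z$. Combined with the classification of Heckman recalled in~\eqref{chispher}, every such spherical function $\phi$, when restricted to the family $\{a_t:t\in A_q\}$, coincides with $t\mapsto \prod_{j=1}^{q}\cos^{|l|}t_j\cdot R_\lambda(k(p,q,l);t)$ for a unique pair $(\lambda,l)\in P_+\times\mathbb Z$. Thus the only point still to establish is how to express $\phi$ on an arbitrary representative $b_x$ of a double coset in $G//K_1$, that is, to produce the predicted factor $z^l$ for $x=[\cos t,z]$.

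For this I would exploit the $\chi_l$-bi-coinvariance~\eqref{twist}, which yields $\phi(b_x)=\chi_l(k_1k_2)^{-1}\phi(a_t)$ once $b_x$ is written as $k_1a_tk_2$ with $k_1,k_2\in K=S(U(p)\times U(q))$. With the ansatz $k_i=\mathrm{diag}(\alpha_i,\beta_i)$, where $\alpha_i=\mathrm{diag}(I_{p-q},h(\zeta_i))$ and $\beta_i=h(\eta_i)$, a block-by-block comparison of $k_1a_tk_2$ with~\eqref{b-x-z-def}, using that $\cos\underline t$ and $\sin\underline t$ commute with every $h(\cdot)$, forces the scalar relations $\zeta_1\zeta_2=z^{-1}$, $\zeta_1\eta_2=\eta_1\zeta_2=1$ and $\eta_1\eta_2=z$. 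Fixing any square root and setting $\zeta_1=\zeta_2=z^{-1/2}$, $\eta_1=\eta_2=z^{1/2}$ satisfies these relations together with the determinant condition $\det\alpha_i\det\beta_i=1$, and inspection of the positions of the nontrivial diagonal entries shows $k_1=k_2=H_{z^{-1/2}}\in K$. Hence $\chi(k_1k_2)=z^{-1}$ independently of the choice of root, and $\phi(b_x)=z^l\phi(a_t)$, which is precisely~\eqref{def-spherical-jacobi}.

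A short consistency check closes the argument. The right-hand side of~\eqref{def-spherical-jacobi} must be well-defined on $X_q$, i.e.\ independent of $z$ whenever $\cos t_1=0$; but either $l=0$ and no $z$ appears, or $|l|\geq 1$ and the factor $\cos^{|l|}t_1=0$ kills the product. Conversely, any function of the stated form is continuous, $\chi_l$-bi-coinvariant under $K$ and therefore $K_1$-biinvariant, and satisfies the twisted product formula~\eqref{twist-prod} inherited from the corresponding elementary spherical function of $(G,K)$; by Lemma~\ref{euiv-def-spher} it is then $K_1$-spherical, and distinct pairs $(\lambda,l)$ give distinct functions because the underlying type-$\chi_l$ spherical functions of $(G,K)$ are themselves pairwise distinct. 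The only nontrivial step in the whole argument is the algebraic matching of diagonal twists in $k_1a_tk_2$ with those in $b_x$, which is pure bookkeeping.
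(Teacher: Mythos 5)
Your argument is correct and follows essentially the same route as the paper: Lemma~\ref{euiv-def-spher} plus Heckman's classification \eqref{chispher}, combined with the decomposition $b_x = H_{1/\sqrt{z}}\, a_t\, H_{1/\sqrt{z}}$ and the bi-coinvariance \eqref{twist} to produce the factor $z^l$. The paper simply writes down this decomposition directly where you derive it by block bookkeeping, and it leaves the converse inclusion and the well-definedness check more implicit than you do.
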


\begin{proof}

First observe that $\phi_{\lambda,l}^p$ is indeed well-defined as a function on $X_q$, 
because 
the right side is zero if $t_1 = \pi/2$, independently of $z\in \mathbb T.$
Now suppose that $\varphi: G\to \mathbb C$ is spherical for $(G,K_1)$. Then by Lemma \ref{euiv-def-spher} it is 
$\chi_l$-spherical for some $l\in \mathbb Z$. Consider $x=[\cos  t, z] \in X_q$ and write
$b_x = H_{1/\sqrt{z}}\,a_tH_{1/\sqrt{z}}$ with an arbitrary square root of $z$. Then in view of \eqref{twist}, 
\[ \varphi(b_x) = z^l \cdot \varphi(a_t),\]
where $\varphi(a_t)$ is of the form \eqref{chispher}. This proves the statement.
\end{proof}


\begin{example}[The rank-one case $q=1$] \label{q1--example-1}
Here $G/K = SU(p+1)/S(U(p)\times U(1))\cong U(p+1)/U(p)$ and $G//K$ is homeomorphic to the unit disk 
$X_1=\{z\in \mathbb C: |z|\leq 1\}~=~D$. 
We shall identify the spherical
functions  $\phi_{\lambda,l}^p $  as the well-known disk polynomials on $D$ introduced in \cite{K2}, which are known to be
the spherical functions of $(U(p+1),U(p)).$ 

We have $R_+ = \{ 2,4\} \subset \mathbb R$ and $P_+=2\mathbb Z_+$. According to the example on p.89f of \cite{O1}, 
$F_{BC_1}(\lambda,
k;t)$ may be expressed as a  $_2F_1$- (Gaussian) hypergeometric function. 
 Consider the renormalized  one-dimensional Jacobi
polynomials
\begin{equation}\label{classical-jacobi-pol}
R_n^{(\alpha,\beta)}(x)= \>_2F_1(\alpha+\beta+n+1, -n, \alpha+1; (1-x)/2)
\quad(x\in\mathbb R, \> n\in\mathbb Z_+)
\end{equation}
for $\alpha,\beta>-1.$ Then the Heckman-Opdam polynomials associated with $R=2BC_1$ and multiplicity $k=(k_1,k_2)$ can be written as
\begin{equation}\label{ident-jacobi}
R_{2n}(k;t)=R_n^{(\alpha,\beta)}(\cos 2t)\quad\quad (n\in \mathbb Z_+, \>
t\in[0,\pi/2])
\end{equation}
with 
$$\alpha=k_1+k_2-1/2, \quad \beta= k_2-1/2;$$
c.f. equation (5.4) of \cite{RR}, where a different scaling of roots and multiplicites is used. 
Writing $r=\cos t $, we have $\cos 2t = 2r^2-1$. 
We thus obtain from Theorem \ref{classification-spher-allg} the
well-known fact that the spherical functions of $(G,K)$ are given, as functions on $D$,  by
the  so-called disk polynomials
\begin{equation}\label{disk-polynimials}
\widetilde\phi_{n,l}(zr)= \varphi_{2n, l}^p([r,z]) = z^l r^{|l|}\cdot R_n^{(p-1,|l|)}(2r^2-1) \quad\quad
(z\in\mathbb T,\> r\in[0,1])
\end{equation}
with $l\in\mathbb Z$, $n\in\mathbb Z_+$.
Moreover, the product formula of Theorem \ref{prop-group-torus-convo-Xq} becomes in this case  
\begin{equation}\label{disk-polynimial-product-formulas}
\widetilde\phi_{n,l}(zr)\cdot \widetilde\phi_{n,l}(z^\prime s)=
\frac{1}{\kappa_p}\int_{D} 
\widetilde\phi_{n,l}(zz^\prime (rs-w\sqrt{1- r^2}  \sqrt{1- s^2} ))\cdot
(1-|w|^2)^{p-2}\> dw
\end{equation}
with 
$$\kappa_p=\int_{D}(1-|w|^2)^{p-2}\> dw= \frac{\pi}{p-1}. $$ 
This formula is well known; see for instance \cite{AT}, \cite{Ka}.
\end{example}

\section{Convolution algebras on the cone $X_q$ for a continuous family of multiplicities}

In this section, we extend the product formula for the spherical
 functions of $(G,K_1)$  in Theorem
 \ref{prop-group-torus-convo-Xq} from integers $p\ge 2q$ to a continuous range of 
parameters
 $p\in]2q-1,\infty[$.
 We show that for each $p\in]2q-1,\infty[$,
the corresponding product formula induces a commutative Banach algebra structure
 on the space of all bounded Borel measures on $X_q$
and an associated commutative hypergroup structure. These hypergroups generalize the known disk
hypergroups for $q=1$ which were 
  studied for instance in \cite{AT}, \cite{Ka}; see also  the monograph
\cite{BH}. 

As ususal, the basis for analytic continuation will be 
 Carlson's theorem,  which we 
recapitulate for the reader's convenience from \cite{Ti}:

\begin{theorem}\label{continuation} Let $f$ be holomorphic in a neighbourhood
of
$\{z\in \mathbb C:{\rm Re\>} z \geq 0\}$ satisfying $f(z) =
O\bigl(e^{c|z|}\bigr)$
 for some $c<\pi$. 
If $f(z)=0$ for all nonnegative integers $z$, then $f$ is identically zero on
$\{{\rm Re\>}  z>0\}$.
\end{theorem}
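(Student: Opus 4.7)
The plan is to apply the Phragm\'en-Lindel\"of principle to the auxiliary function $g(z):=f(z)/\sin(\pi z)$, in the spirit of the classical proof due to Carlson. The zeros of $\sin(\pi z)$ at the nonnegative integers are cancelled by the vanishing of $f$, and $\sin(\pi z)$ has no further zeros in $\{{\rm Re}\, z\ge 0\}$, so $g$ extends to a holomorphic function in a neighbourhood of the closed right half-plane.

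The first calculation is to bound $g$ on the imaginary axis. Using $|\sin(\pi iy)|=\sinh(\pi|y|)$ together with the hypothesis $|f(z)|=O(e^{c|z|})$ with $c<\pi$, one obtains
\[
|g(iy)|\;\le\;\frac{A\,e^{c|y|}}{\sinh(\pi|y|)}\;\longrightarrow 0\quad\text{as }|y|\to\infty,
\]
so $g$ is bounded, say by $M$, on the entire imaginary axis. Next I would estimate the growth of $g$ in the interior: the identity $|\sin(\pi z)|^2=\sin^2(\pi x)+\sinh^2(\pi y)\ge\sinh^2(\pi y)$, combined with $|z|\le x+|y|$, gives
\[
|g(z)|\;\le\;K\,e^{cx+(c-\pi)|y|}
\]
for $|y|$ large, and a bound of the same type on the horizontal strip $|y|<1$ follows from continuity and the exponential bound on $f$. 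In particular $g$ has exponential type at most $c<\pi$ in the right half-plane.

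With these two ingredients, I would apply Phragm\'en-Lindel\"of for a half-plane to conclude that $g$ is bounded on all of $\{{\rm Re}\, z>0\}$, if necessary by working separately in the two quadrants $\{{\rm Re}\, z>0,\ \pm{\rm Im}\, z>0\}$ with auxiliary exponential factors $e^{\mp i\pi z}$ chosen so that the three rays $\arg z=0,\pm\pi/2$ all see a bounded boundary value. A second application of Phragm\'en-Lindel\"of, exploiting the decay $|g(iy)|\to 0$ on the imaginary axis, then forces $g\equiv 0$ in the open right half-plane; consequently $f\equiv 0$ there as well.

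The delicate point, and the one most likely to demand care, is the coupling of the interior growth estimate with Phragm\'en-Lindel\"of: the quotient $f/\sin(\pi z)$ has no obvious uniform control along the positive real axis, where $\sin(\pi z)$ has infinitely many zeros. The hypothesis $c<\pi$ is used sharply here --- it is precisely the gap $\pi-c>0$ that makes the decay $e^{(c-\pi)|y|}$ in vertical directions dominate the growth of $f$, so that Phragm\'en-Lindel\"of gives a nontrivial bound rather than collapsing. Once this growth/boundary balance is set up correctly, the remainder of the argument is routine.
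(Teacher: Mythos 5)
The paper offers no proof of this statement: it is quoted verbatim from Titchmarsh \cite{Ti} ``for the reader's convenience,'' so your attempt can only be judged on its own merits. Your skeleton is indeed the classical one (the auxiliary function $g=f/\sin(\pi z)$, the identity $|\sin(\pi z)|^2=\sin^2(\pi x)+\sinh^2(\pi y)$, the estimate $|g(z)|\le Ke^{cx-(\pi-c)|y|}$ away from the real axis), and you correctly identify the delicate point --- but the two decisive steps do not go through as you describe them. First, the claim that $g$ is bounded in the half-plane cannot be obtained by Phragm\'en--Lindel\"of in the two \emph{quadrants} with factors $e^{\mp i\pi z}$: both quadrants have the positive real axis as a boundary ray, where $g$ is a priori only $O(e^{cx})$ (your ``bound of the same type on the strip $|y|<1$'' in fact requires a maximum-principle argument on unit boxes around the integers, not mere continuity, and it yields growth $e^{cx}$, not boundedness), and $|e^{\mp i\pi z}|=1$ on that ray, so the auxiliary factors do nothing there. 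No factor $e^{\alpha z}$ can repair this: the best one gets from the quadrants is $|g(z)|\le Ce^{cx}$, which you already had. The standard repair is different: choose $\theta_0\in(0,\pi/2)$ with $c\cos\theta_0-(\pi-c)\sin\theta_0<0$, note that $g$ is then bounded (indeed decays) on the rays $\arg z=\pm\theta_0$, and apply Phragm\'en--Lindel\"of separately in the three sectors $|\arg z|\le\theta_0$ and $\theta_0\le\pm\arg z\le\pi/2$, each of opening $<\pi$ and each bounded by rays on which $g$ \emph{is} bounded.

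Second, even granting that $g$ is bounded on $\{\mathrm{Re}\,z\ge 0\}$, ``a second application of Phragm\'en--Lindel\"of, exploiting the decay $|g(iy)|\to 0$'' does not force $g\equiv 0$: the function $1/(1+z)$ is bounded on the right half-plane and its boundary values tend to $0$ at $\pm i\infty$, yet it is not identically zero. What kills $g$ is the \emph{exponential rate} $|g(iy)|=O(e^{-(\pi-c)|y|})$ combined with an ingredient that is not Phragm\'en--Lindel\"of: for a bounded holomorphic function $g\not\equiv 0$ on the half-plane one has $\int_{\mathbb R}\frac{\log|g(iy)|}{1+y^2}\,dy>-\infty$ (log-integrability of boundary values, i.e.\ a Poisson--Jensen/Nevanlinna argument), whereas the bound $\log|g(iy)|\le \log C-(\pi-c)|y|$ makes this integral diverge to $-\infty$; hence $g\equiv 0$. (Equivalently one can finish with Carleman's formula counting the zeros at $0,1,2,\dots$ against the growth $e^{c|z|}$.) Until these two steps are supplied, the argument is a plausible outline of Titchmarsh's proof rather than a proof; the hypothesis $c<\pi$ enters exactly where you suspect, but it is used through the tilted sectors and the divergent logarithmic integral, not through the factors $e^{\mp i\pi z}$.
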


 It is now straightforward, but a bit nasty in detail to prove
that the
 product formula of 
  Theorem 
 \ref{prop-group-torus-convo-Xq} can be extended analytically with respect to the
variable $p$. For the necessary exponential bounds,
one has to use  that the  coefficients of the Jacobi polynominals $P_\lambda(k;t) = c(\lambda+\rho(k),k)^{-1}R_\lambda(k;t)$ 
 are rational in the multiplicity $k$, see Par. 11 of \cite{M}.
 As the arguments are the same as those in the proof of  Theorem 
4.1 of \cite{R2} and Theorem 3.2 of \cite{V9}, we skip the details.
We obtain:  

\begin{theorem}\label{general-twodim-prod-form}
Let $p\in]2q-1,\infty[$, $\lambda\in P_+$, and $l\in\mathbb Z$.
Then the functions
$$\phi_{\lambda,l}^p([\cos t,z])=z^l\cdot\prod_{j=1}^q \cos^{|l|} t_j\cdot
 R_\lambda(k;t) $$
on $X_q$ with multiplicity $k=(p-q-|l|,\> \frac12 +|l|,\> 1)\in K^{reg}$ satisfy the
product formula
\begin{align}
\phi_{\lambda,l}^p([\cos t,z])\cdot\phi_{\lambda,l}^p([\cos t^\prime, z^\prime]) \,=\, 
\frac{1}{\kappa_p}&\int_{B_q}\int_{SU(q)} 
\phi_{\lambda,l}^p\bigr(\big[\sigma_{sing}(-\sin\underline t \,w\sin\underline t^\prime +
\,\cos \underline t\,v\cos \underline t^\prime),\notag  \\
zz^\prime\cdot \arg\Delta(-\sin\underline t &\,w\sin\underline t^\prime +
\cos \underline t\, v \cos \underline t^\prime)\big]\bigr)\cdot
\Delta(I_q-w^*w)^{p-2q}\, dvdw.\notag
\end{align}
for $(t,z),(t^\prime,z^\prime)\in A_q\times\mathbb T$.
\end{theorem}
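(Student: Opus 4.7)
The plan is to treat $p$ as a complex parameter and apply Carlson's theorem (Theorem \ref{continuation}). Fix $\lambda\in P_+$, $l\in\mathbb{Z}$, and $(t,z),(t',z')\in A_q\times\mathbb{T}$, and let $F(p)$ denote the difference of the left and right sides of the claimed identity. After the shift $s = p - (2q-1)$, I would show: (a) $F(s+2q-1)$ is holomorphic on $\{\text{Re}\,s > 0\}$ and extends continuously to the closed half-plane; (b) it satisfies an exponential bound $|F(s+2q-1)| = O(e^{c|s|})$ with some $c<\pi$; and (c) $F(p)=0$ for every integer $p\ge 2q$. Given (a)--(c), Carlson's theorem forces $F\equiv 0$ on $\{\text{Re}\,p > 2q-1\}$, which in particular yields the identity for every real $p\in\,]2q-1,\infty[\,$. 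Item (c) is just Theorem \ref{prop-group-torus-convo-Xq}, so only (a) and (b) need work.

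For the holomorphy of the left side, the multiplicity $k(p,q,l)=(p-q-|l|, 1/2+|l|, 1)$ satisfies $\text{Re}(k_1+k_2)=\text{Re}\,p-q+1/2 > 0$ whenever $\text{Re}\,p > q - 1/2$, so by Remark 4.4.3 of \cite{HS} it lies in $K^{\mathrm{reg}}$ throughout the target half-plane. The renormalized polynomial $R_\lambda(k;t) = c(\lambda+\rho(k),k)\,P_\lambda(k;t)$ is, by Par.~11 of \cite{M}, a trigonometric polynomial in $t$ whose coefficients are rational in $k$ multiplied by a ratio of Gamma functions in the entries of $\lambda+\rho(k)$ and $k$, so it depends holomorphically on $p$ in the region considered. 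Holomorphy of the right side follows from the observation that $\Delta(I_q-w^*w)^{p-2q}$ is entire in $p$ and remains integrable on $B_q$ precisely when $\text{Re}\,p > 2q - 1$; the normalization $\kappa_p$ is, up to a constant factor, a product of Gamma functions in $p$ that is holomorphic and non-vanishing on this half-plane, and the remaining $p$-dependence inside the integrand is again through the coefficients of $R_\lambda(k(p,q,l);\cdot)$.

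For the exponential bound, these same coefficients depend on $p$ as rational expressions in Gamma values whose arguments are linear in $p$, so Stirling's asymptotics gives polynomial growth in $|p|$ on $\{\text{Re}\,p\ge 2q-1+\delta\}$ for any $\delta>0$, uniformly over the compact set of $t$-values arising from $\sigma_{sing}(\cdot)$. The factor $|\Delta(I_q-w^*w)^{p-2q}|$ is bounded by $1$ on $B_q$ for $\text{Re}\,p\ge 2q$ and is dominated by an integrable function on the slab $2q-1+\delta\le\text{Re}\,p\le 2q$, while $\kappa_p^{-1}$ is again polynomially bounded; altogether $|F(p)| = O(|p|^M)$ for some $M$, which is far stronger than the $O(e^{c|p|})$ with $c<\pi$ required by Carlson. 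The main obstacle, and precisely what the excerpt defers to \cite{R2} and \cite{V9}, is the bookkeeping needed to verify that no Gamma-function poles of either the $c$-function factor inside $R_\lambda$ or of $\kappa_p^{-1}$ intrude into the target half-plane, and to keep all polynomial estimates uniform in the auxiliary variables $(w,v)$ and $t$; once this technical audit is complete, items (a)--(c) combine with Theorem \ref{continuation} to finish the proof.
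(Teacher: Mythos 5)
Your overall strategy is exactly the paper's: the formula is first established for integers $p\ge 2q$ (Theorem \ref{prop-group-torus-convo-Xq}) and then extended in the variable $p$ via Carlson's theorem, with the rationality in $k$ of the coefficients of the Jacobi polynomials (Par.~11 of \cite{M}) supplying the analyticity and growth bounds; the paper itself only sketches this and refers to \cite{R2} and \cite{V9} for the details you are filling in. Your discussion of where $k(p,q,l)$ stays regular, of the convergence of the $w$-integral for $\operatorname{Re}p>2q-1$, and of the polynomial growth via Stirling is the right content for that audit.

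There is, however, one concrete misstep in your application of Theorem \ref{continuation}. With the shift $s=p-(2q-1)$, item (c) gives $F=0$ only at the \emph{positive} integers $s=1,2,\dots$, whereas Carlson's theorem as quoted requires vanishing at \emph{all nonnegative} integers; you have no information at $s=0$, i.e.\ at $p=2q-1$ --- indeed the paper notes that at $p=2q-1$ the integral formula degenerates and requires different coordinates, and $\kappa_p$ itself diverges there. Relatedly, the theorem requires holomorphy in a \emph{neighbourhood} of the closed half-plane $\{\operatorname{Re}s\ge 0\}$, which for your shift would force you to points with $\operatorname{Re}p<2q-1$, where the $w$-integral no longer converges; the continuity up to the boundary asserted in your item (a) is not sufficient for the version of Carlson's theorem stated in the paper. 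The standard repair is to shift by $2q$ instead: $G(s):=F(s+2q)$ is holomorphic on a neighbourhood of $\{\operatorname{Re}s\ge 0\}$ (since $F$ is holomorphic on $\{\operatorname{Re}p>2q-1\}$), vanishes at $s=0,1,2,\dots$ by Theorem \ref{prop-group-torus-convo-Xq}, and satisfies your growth bound, so Carlson yields $F\equiv 0$ on $\{\operatorname{Re}p>2q\}$; the identity theorem on the connected domain $\{\operatorname{Re}p>2q-1\}$ then propagates the vanishing to the full claimed range. With that correction your argument coincides with the one the paper delegates to \cite{R2} and \cite{V9}.
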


The positive product formula in Theorem \ref{general-twodim-prod-form}
for $p\in]2q-1,\infty[$ leads to a continuous series of probability-preserving
commutative convolution algebras on the cone $ X_q$. In
fact, similar to the noncompact case \cite{V9}, we obtain commutative
hypergroups structures on $X_q$ with the $\phi_{\lambda,l}^p$  ($\lambda\in P_+,
\>l\in\mathbb Z$)
as hypergroup characters. To start with, let us 
briefly recapitulate some notions from hypergroup theory from \cite{J},
\cite{BH}.

\begin{definition} A hypergroup is a locally compact Hausdorff space $X$ with a
  weakly continuous, associative and bilinear convolution $*$ on the Banach space
$M_b(X)$
  of all bounded regular Borel measures on $X$ such that the following
  properties hold:
\begin{enumerate}\itemsep=-1pt
\item[\rm{(1)}] For all $x,y\in X$,  $\delta_x*\delta_y$  is a compactly
  supported probability measure on $X$ such that the support
  $\text{supp}(\delta_x*\delta_y)$ depends continuously on $x,y$ with respect
to the so-called
  Michael topology on the space of compact subsets of $X$ (see \cite{J} for
details).  
\item[\rm{(2)}] There exists a neutral element $e\in X$ with $\delta_x*\delta_e=
\delta_e*\delta_x=\delta_x$ for all $x\in X$. 
\item[\rm{(3)}] There exists a continuous involution $x\mapsto\overline x$ on $X$
  such that  $e\in \text{supp} (\delta_x*\delta_y)$ holds
  if and only if $y=\overline x$, and such that $(\delta_x*\delta_y)^-= \delta_{\overline y}*\delta_{\overline x}$, where for  $\mu\in M_b(X)$, 
  the measure $\mu^-$ denotes the pushforward of $\mu$ under the involution.
\end{enumerate}
\end{definition}

Due to weak continuity and bilinearity, the convolution of  bounded
measures on a  hypergroup is  uniquely determined by the convolution of point
measures. 
A  hypergroup is called commutative if so is the convolution. 
We recall  from \cite{J}  that for a Gelfand pair $(G,K)$, the double coset
 space $G//K$ carries the structure of a commutative hypergroup in a natural way.
For a  commutative hypergroup $X$  the dual space is defined by
$$ \widehat X = \{\varphi\in C_b(X): \,\varphi\not\equiv 0, \, \varphi( x* \overline
y ):=
(\delta_x*\delta_{\overline y})(\varphi) = 
\varphi(x)\overline{\varphi(y)} \,\, \forall\,  x,y\in X\} .$$
Each commutative hypergroup $(X,*)$ 
admits a (up to a multiplicative constant unique)
Haar measure $\omega_X$, which is characterized by
 the condition $\omega_X(f)=\omega_X(f_x)$ for
all continuous, compactly supported $f\in C_c(X)$ and all $x\in X$, 
where $f_x$ denotes  the translate
 $f_x(y)=(\delta_y*\delta_x)(f)$.

Now let $p\in ]2q-1,\infty[$. Using the  positive product formula
of Theorem \ref{general-twodim-prod-form}, 
we  introduce the convolution of point measures 
on  $X_q$ by
\begin{align}\label{def-convolution-aq}
(\delta_{[\cos t,z]}*_p \delta_{[\cos t^\prime,z^\prime]})(f):=
 & \,\frac{1}{\kappa_p}\int_{B_q}\int_{SU(q)} 
f\bigl(\big[\sigma_{sing}(-\sin\underline t \,w \sin\underline t^\prime \,+\,
\cos \underline t\, v\,\cos \underline t^\prime
),\notag  \\
zz^\prime\cdot \arg\Delta &(-\sin\underline t \,w\sin\underline t^\prime \,+\,
\cos \underline t\, v \cos \underline t^\prime)\big]\bigr)
\Delta(I_q-w^*w)^{p-2q} dv dw
\end{align}
for $f\in C_b(X_q)$.

\begin{theorem}\label{mainhypergroup}
Let $q\ge 1$ be an integer and $p\in]2q-1,\infty[$. 
Then the convolution $*_p$ defined in (\ref{def-convolution-aq}) 
extends uniquely to a bilinear,
 weakly continuous, commutative  convolution on the Banach space
 $M_b(X_q)$. 
This convolution is also associative, and $(X_q, *_{p})$ is a 
commutative hypergroup with $[(1, \ldots, 1),1]$ as identity and with the involution
 $\overline{[r,z]}:= [r,\overline z]$. A Haar measure of the hypergroup $(X_q, *_{p})$ is given by
  \[ d\omega_p([r,z]) = \prod_{j=1}^q r_j(1-r_j^2)^{p-q}\cdot\prod_{1\leq i < j\leq q} 
 (r_i^2 - r_j^2)^2\,drdz\]
 with the Lebesgue measure $dr$ on $\mathbb R^q$ and the normalized 
 Haar measure $dz$ on $\mathbb T.$ Finally, the dual space is
  given by 
 \[ (X_q, *_{p})^\wedge = \{\varphi_{\lambda, l}^p: \lambda\in P_+,\, l\in \mathbb Z\}.\]

 \end{theorem}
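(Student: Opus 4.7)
The plan is to verify the hypergroup axioms directly from the integral formula (\ref{def-convolution-aq}), using Theorem \ref{general-twodim-prod-form} to handle associativity and the dual characterization. For integer $p \geq 2q$ the formula reproduces the double-coset convolution of the Gelfand pair $(G,K_1)$, so the axioms hold by general Jewett-type theory; the substantive task is to handle general $p \in ]2q-1,\infty[$. The key structural input is that the family $\{\varphi_{\lambda,l}^p : \lambda \in P_+, l \in \mathbb Z\}$ separates points of $X_q$, is closed under complex conjugation (since $\overline{\varphi_{\lambda,l}^p} = \varphi_{\lambda,-l}^p$), and its complex linear span is therefore Stone--Weierstrass dense in $C(X_q)$.

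First I would verify the basic structural properties. For $p > 2q-1$ the weight $\Delta(I_q - w^*w)^{p-2q}$ is non-negative on $B_q$ and the normalization $\kappa_p$ makes $\delta_x *_p \delta_y$ a probability measure; its compact support is the continuous image of $B_q \times SU(q)$ under the map in (\ref{def-convolution-aq}). Bilinearity and weak continuity of the extension to $M_b(X_q)$, as well as continuity of the support in the Michael topology, follow by dominated convergence exactly as in the proof of Theorem 4.1 of \cite{R2}. Commutativity is obtained from the substitution $(v,w) \mapsto (v^*, w^*)$, which preserves both $\sigma_{sing}$ and $|\Delta|$. For the identity axiom, inserting $t = 0$ gives $\sin\underline t = 0$ and $\cos\underline t = I_q$; since $\det v = 1$ for $v \in SU(q)$, the equivalence relation on $X_q$ yields $\delta_e *_p \delta_{x'} = \delta_{x'}$ with $e = [(1,\ldots,1),1]$. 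The substitutions $w \mapsto -w^*$, $v \mapsto v^{-1}$ in (\ref{def-convolution-aq}) show that $\overline{[r,z]} := [r,\bar z]$ defines an involution satisfying $(\delta_x *_p \delta_y)^- = \delta_{\bar y} *_p \delta_{\bar x}$, and the support condition $e \in \mathrm{supp}(\delta_x *_p \delta_y) \Leftrightarrow y = \bar x$ is obtained by inspecting when the integrand can reach $\sigma_{sing} = (1,\ldots,1)$.

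The main obstacle is associativity, which I would establish by the character method. By Theorem \ref{general-twodim-prod-form}, each $\varphi_{\lambda,l}^p$ is multiplicative for $*_p$, so for any $x,y,u \in X_q$ and any $\lambda,l$,
\[
\int_{X_q} \varphi_{\lambda,l}^p \, d\bigl((\delta_x *_p \delta_y) *_p \delta_u\bigr) \,=\, \varphi_{\lambda,l}^p(x)\varphi_{\lambda,l}^p(y)\varphi_{\lambda,l}^p(u) \,=\, \int_{X_q} \varphi_{\lambda,l}^p \, d\bigl(\delta_x *_p (\delta_y *_p \delta_u)\bigr).
\]
By Stone--Weierstrass density of the characters in $C(X_q)$, the two compactly supported probability measures on the left and right must coincide, and bilinear extension then yields associativity on all of $M_b(X_q)$.

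For the Haar measure and dual, invariance of $\omega_p$ under all translations is verified by testing against the characters: for $(\lambda,l) \neq (0,0)$, both $\int \varphi_{\lambda,l}^p \, d(\omega_p *_p \delta_x)$ and $\varphi_{\lambda,l}^p(\bar x) \int \varphi_{\lambda,l}^p \, d\omega_p$ vanish by Fourier orthogonality in $z$ together with Heckman--Opdam orthogonality for multiplicity $k(p,q,l)$ on $A_q$ (cf.\ \cite{HS}), while for $(\lambda,l) = (0,0)$ both sides equal the total mass; Stone--Weierstrass density then gives $\omega_p *_p \delta_x = \omega_p$, and uniqueness of the Haar measure is a general hypergroup property. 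Finally, the general theory of compact commutative hypergroups asserts that the characters form a complete orthogonal system in $L^2(\omega_p)$; the $\varphi_{\lambda,l}^p$ already form such a system by polynomial density of Heckman--Opdam polynomials on $A_q$ combined with Fourier density on $\mathbb T$, so they exhaust the dual.
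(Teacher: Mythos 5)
Your overall architecture coincides with the paper's: the core of the argument is that the characters $\varphi_{\lambda,l}^p$ are multiplicative for $*_p$ by Theorem \ref{general-twodim-prod-form} and span a Stone--Weierstrass-dense subalgebra of $C(X_q)$ (this is Lemma \ref{orthobasis} in the paper), which yields commutativity, associativity, the translation invariance of $\omega_p$, and the identification of the dual via orthogonality of characters of a compact hypergroup. Two of your direct verifications, however, need repair. First, the substitution $(v,w)\mapsto(v^*,w^*)$ does \emph{not} prove commutativity: it preserves $\sigma_{sing}$ and $|\Delta|$, but the integrand depends on $\arg\Delta = \Delta/|\Delta|$, and $\Delta(A^*)=\overline{\Delta(A)}$, so the phase gets conjugated and you obtain $\delta_{x'}*_p\delta_x$ with the wrong argument of the determinant. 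The fix is either to substitute transposes, $(v,w)\mapsto(v^T,w^T)$, for which $\Delta(A^T)=\Delta(A)$ and all reference measures are invariant, or simply to run the same character/density argument you already use for associativity (which is what the paper does).

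Second, and more substantively, the support axiom $e\in\operatorname{supp}(\delta_x*_p\delta_y)\Leftrightarrow y=\overline x$ is the one hypergroup axiom that genuinely requires work, and "inspecting when the integrand can reach $\sigma_{sing}=(1,\ldots,1)$" is not yet a proof: one must show that $-\sin\underline t\,w\sin\underline t^\prime+\cos\underline t\,v\cos\underline t^\prime$ can be unitary only if $t=t^\prime$ and that in that degenerate situation the phase $zz^\prime\arg\Delta(\cdot)$ is forced to equal $1$ exactly when $z^\prime=\overline z$ (already for $q=1$ this uses the equality case of the triangle inequality, which pins down $w$ and hence the phase). The paper avoids this computation entirely by observing that the support of $\delta_x*_p\delta_y$ is independent of $p\in\,]2q-1,\infty[$ (the density $\Delta(I_q-w^*w)^{p-2q}$ is strictly positive on the interior of $B_q$ for every such $p$), so the support axiom and the Michael continuity of supports transfer verbatim from the integer case $p\ge 2q$, where they hold because $*_p$ is the double coset convolution of $(G,K_1)$. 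If you either supply the unitarity analysis or adopt this $p$-independence device, your proof is complete; the remaining steps (identity element, the involution identity $(\delta_x*_p\delta_y)^-=\delta_{\overline y}*_p\delta_{\overline x}$, Haar measure, and dual space) are correct and agree with the paper's argument.
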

 
 Note that $\omega_p$ is the pushforward measure under the mapping
$(t,z)\mapsto [\cos t, z], \, A_q\times \mathbb T \to X_q\,$ of the measure
\[ d\widetilde\omega_p(t,z) = \prod_{j=1}^q \cos t_j\sin^{2p-2q+1}t_j 
\cdot\prod_{1\leq i < j\leq q} (\cos(2t_i)-\cos(2t_j))^2 dtdz \]
on $A_q\times \mathbb T.$

For the proof of Theorem \ref{mainhypergroup}, consider the measure $\omega_p$ defined above.
We start with the following observation:

\begin{lemma}\label{orthobasis} The functions
\[ \varphi_{\lambda,l}^p \quad (\lambda \in P_+, \, l\in \mathbb Z)\]
form an orthogonal basis of $L^2(X_q, \omega_p),$ and their $\mathbb C$-span is dense in the space $C(X_q)$ 
of continuous functions on $X_q$ with respect to $\|.\|_\infty.$ 
 \end{lemma}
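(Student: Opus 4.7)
I would prove orthogonality and uniform density in $C(X_q)$ separately; $L^2$-density then follows because $\omega_p(X_q)<\infty$.

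For orthogonality, transport $d\omega_p$ to $A_q\times\mathbb{T}$ via $r=\cos t$, producing the measure $d\widetilde\omega_p$ displayed after the theorem. Then
\[
\langle\phi_{\lambda,l}^p,\phi_{\lambda',l'}^p\rangle_{L^2(\omega_p)} = \Bigl(\int_{\mathbb{T}} z^{l-l'}dz\Bigr)\int_{A_q} \prod_j \cos^{|l|+|l'|}\!t_j\, R_\lambda R_{\lambda'} \cdot \prod_j\cos t_j\sin^{2p-2q+1}\!t_j\prod_{i<j}(\cos 2t_i-\cos 2t_j)^2 dt.
\]
The $\mathbb{T}$-integral kills the case $l\neq l'$. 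For $l=l'$, I would use $\sin 2t_j=2\sin t_j\cos t_j$ together with the product-to-sum identity $4\sin(t_i-t_j)\sin(t_i+t_j)=2(\cos 2t_j-\cos 2t_i)$ to check that the remaining $A_q$-weight coincides, up to a positive constant, with the Heckman--Opdam orthogonality weight $\delta_{k(p,q,l)}(t)=\prod_{\alpha\in R_+}(2\sin\tfrac12\langle\alpha,t\rangle)^{2k_\alpha}$. Since for $p\in\,]2q-1,\infty[$ the combined weight is strictly positive on the interior of $A_q$ and integrable, the standard orthogonality of the $R_\lambda(k(p,q,l);\cdot)$ with respect to $\delta_{k(p,q,l)}$ closes the case $\lambda\ne\lambda'$.

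For density I would apply Stone--Weierstrass to $S:=\mathrm{span}_{\mathbb{C}}\{\phi_{\lambda,l}^p:\lambda\in P_+,\,l\in\mathbb{Z}\}\subset C(X_q)$. Unitality holds since $\phi_{0,0}^p\equiv 1$, and $S$ is closed under conjugation because $\overline{\phi_{\lambda,l}^p}=\phi_{\lambda,-l}^p$ (using $\bar z^l=z^{-l}$, that $k(p,q,l)$ depends only on $|l|$, and that $R_\lambda(k;\cdot)$ is real on $A_q$ for real $k$). The decisive point is algebra closure: from
\[
\phi_{\lambda,l}^p\phi_{\mu,m}^p = z^{l+m}\prod_j\cos^{|l|+|m|}\!t_j\cdot R_\lambda(k(p,q,l);t)R_\mu(k(p,q,m);t)
\]
and the integer identity $|l|+|m|-|l+m|=2\min(|l|,|m|)\mathbf{1}_{lm<0}\in 2\mathbb{Z}_{\ge 0}$, one can pull out $\prod_j\cos^{|l+m|}\!t_j$ and leave a symmetric polynomial in $r_1^2,\ldots,r_q^2$; since $\{R_\nu(k(p,q,l+m);\cdot):\nu\in P_+\}$ is a basis of such polynomials by Heckman--Opdam theory, the product lies in $S$. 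For separation of points, the $l=0$ members span all symmetric polynomials in $r_1^2=|w|^2,r_2^2,\ldots,r_q^2$, which recover the ordered tuple $(r_1,\ldots,r_q)$ from any $[w,r_2,\ldots,r_q]\in X_q$; and $\phi_{0,1}^p=w\cdot r_2\cdots r_q$ distinguishes two points sharing $(|w|,r_2,\ldots,r_q)$ but differing in $\arg w$ (such points necessarily have $r_2>0$).

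The most delicate point is the orthogonality step: $k_1=p-q-|l|$ is negative once $|l|$ is large, so the individual factor $(2\sin t_j)^{2k_1}$ of $\delta_{k(p,q,l)}$ is singular at $t_j=0$. After merging it with the $k_2$-factor through $\sin 2t_j=2\sin t_j\cos t_j$, the effective exponent of $\sin t_j$ becomes $2p-2q+1>1$, keeping the combined weight on $A_q$ positive and integrable for $p>2q-1$, so the standard Heckman--Opdam orthogonality carries through regardless of the sign of the individual $k_\alpha$.
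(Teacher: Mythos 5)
Your proposal is correct and follows essentially the same route as the paper: orthogonality by transporting $\omega_p$ to $A_q\times\mathbb T$, separating the $z$-integral, and matching the remaining weight with the Heckman--Opdam orthogonality weight $\delta_{k(p,q,l)}$, and density via Stone--Weierstrass. You merely spell out the algebra-closure and point-separation steps that the paper dismisses as ``clear'', and your remark on the negative multiplicity $k_1=p-q-|l|$ addresses a point the paper passes over silently; both are consistent with the paper's argument.
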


\begin{proof}
 For $k=k(p,q,l)$, the Heckman-Opdam polynomials $R_\lambda(k;t)$ are  orthogonal on
the alcove $A_q$ with respect to the weight
\begin{align*} \delta_k(t) = &\,\prod_{\alpha\in R_+} \big\vert e^{i\langle\alpha,t\rangle/2} - 
e^{-i\langle\alpha,t\rangle/2}\big\vert^{2k_\alpha} \\
 =\,  & const\cdot\prod_{j=1}^q \sin^{2p-2q+1}\!t_j\, \cos^{2|l|+1}\! t_j \cdot\prod_{1\leq i<j\leq q} (\cos(2t_i) - \cos(2t_j))^2
   \end{align*}
This immediately implies
that the functions $(t,z)\mapsto\varphi_{\lambda,l}^p([\cos t,z])$ are orthogonal on 
$A_q\times \mathbb T$
with respect to $\widetilde\omega_p.$ 
Let $V$ denote the subspace of $C(X_q)$ spanned by
$\{\varphi_{\lambda, l}^p: \lambda\in P_+, \, l\in \mathbb Z\}.$ Clearly, $V$ is an algebra which is stable 
under complex conjugation. Further, $V$ separates points on $X_q$, 
because the $R_\lambda(k;\,.\,)$ span the  space of Weyl group invariant trigonometric polynomials.
 Hence by the Stone-Weierstra\ss\ theorem, $V$ is dense in $C(X_q)$ with respect
to $\|.\|_\infty$. The rest is obvious.
\end{proof}

 \begin{proof}[Proof of Theorem \ref{mainhypergroup}]
 The statements are clear for integer values of $p$, where $*_p$ is just
the convolution of the double coset hypergroup $SU(p+q)//SU(p)\times SU(q).$ 
>From the  definition of the convolution for general $p$ we see that 
$\delta_{[r,z]} *_{p} \delta_{[r^\prime, z^\prime]}$ is a probability measure
and that the mapping $([r,z],[r^\prime, z^\prime])\mapsto \delta_{[r,z]} *_{p} \delta_{[r^\prime, z^\prime]}$
is weakly continuous. 
It is now standard (see \cite{J}) to extend  the convolution of point measures 
uniquely in a bilinear, weakly
continuous way to a probability preserving convolution on
$M_b(X_q)$.
For commutativity and associativity, 
it suffices to  consider point measures. Let 
$[r_i, z_i]\in X_q\,,\, 1\leq  i\leq 3.$ Then for
 $f= \varphi_{\lambda,l}^p$ with $\lambda\in P_+$ and $l\in \mathbb Z$ we have
\[ (\delta_{[r_1,z_1]}*_{p} \delta_{[r_2,z_2]})(f) = f([r_1,z_1]) f([r_2,z_2]) = 
(\delta_{[r_2,z_2]}*_{p} \delta_{[r_1,z_1]})(f)\]
and in the same way,
\[((\delta_{[r_1,z_1]}*_{p} \delta_{[r_2,z_2]})*_{p}
 \delta_{[r_3,z_3]})(f) =
(\delta_{[r_1,z_1]}*_{p} (\delta_{[r_2,z_2]}*_{p} \delta_{[r_3,z_3]}))(f).\]
By Lemma \ref{orthobasis}, both identities remain valid for arbitrary $f\in C(X_q).$
The remaining hypergroup axioms are immediate from the fact that 
the supports of convolution products  of point measures are independent of $p$.
For the statement on the Haar measure, the argumentation is similar to \cite{RR}. Notice first that by definitition of hypergroup translates, 
the identity 
\begin{equation*} \int_{X_q} f_x\, d\omega_p = \int_{X_q} f d\omega_p \quad \text{for all } 
x\in X_q.\end{equation*}
holds for $f= \varphi_{\lambda, l}^ p$ with arbitrary 
$ \lambda\in P_+, \,  l\in \mathbb Z.$ In view of Lemma \ref{orthobasis}, it extends to arbitrary $f\in C(X_q)$,  
hence $\omega_p$ is a Haar measure. Finally, it is clear that the $\varphi_{\lambda, l}^p$ are hypergroup characters.
There are no further ones, because the characters of a compact hypergroup are orthogonal with 
respect to its Haar measure.

 \end{proof}

The hypergroups $(X_q\,, *_{p})$ have a prominent subgroup.
For this we recall that a closed, non-empty subset
 $H\subset X_q$ is a subgroup if  for all $x,y\in H$,
 $\delta_x*_p\delta_{\overline y}$ is a point measure with support in $H$.
It is clear from (\ref{def-convolution-aq}) that 
\[H:=\{[1,z] =(z,1,\ldots, 1): z\in\mathbb T\}\] 
is a subgroup 
of  $(X_q, *_{p})$ which is isomorphic to the torus group
$\mathbb T$. 

The cosets
 \[x*_pH:=\bigcup_{y\in H}{\rm supp}\> (\delta_x*_p\delta_{ y}), 
\quad x\in X_q \]
form a disjoint decomposition of $X_q$, and the quotient 
\[X_q/H:=\{x*_p\!H: \> x\in X_q \}\]
 is again a locally compact Hausdorff space with respect to the quotient
topology, c.f. Section 10.3. of \cite{J}. Moreover,
\begin{equation}\label{quotient-allg}
(\delta_{x*_pH}*_p\delta_{y*_pH})(f):=\int_X f(z*_pH)\>
d(\delta_{x}*_p\delta_{y})(z), 
\quad x,y\in X_q\, , \, f\in C_b(X_q/H)
\end{equation}
establishes a well-defined quotient convolution and an associated commutative
quotient
hypergroup; see
 \cite{J}, \cite{V1}, and the references given there. We may identify $X_q/H$ 
 topologically with the alcove 
$A_q$ via $[\cos t,z]*_p\!H \mapsto t.$ It is then immediate from
 \eqref{def-convolution-aq} that  the quotient convolution on $A_q$ derived from $\ast_p$ is
 given by
\begin{align}\label{def-convolution-aqres}
(\delta_{t}&*_p \delta_{t^\prime})(f)=\\
&=\frac{1}{\kappa_p}\int_{B_q}\int_{SU(q)} 
f\bigl(\arccos(\sigma_{sing}(-\sin\underline t \,w\sin\underline t^\prime \,+\,
\cos \underline t\, v \cos \underline t^\prime
))\bigr) \cdot\Delta(I_q-w^*w)^{p-2q} dv dw\notag
\end{align}
for 
$t, t^\prime\in A_q$ and $f\in C_b(A_q).$
These are precisely the hypergroup convolutions studied in Section 6 of \cite{RR}.
 For integers $p\ge 2q$, this connection just reflects the fact that for 
 $G=SU(p+q), K= S(U(p)\times U(q))$ and $K_1= SU(p)\times SU(q),$ we have
 $$ (G// K_1)/(K//K_1)\simeq G//K$$
 as hypergroups. This a fact which holds for general commutative double coset hypergroups,  see Theorem 14.3 of \cite{J}.
 
 \begin{remarks}
1.  We mention at this point that 
  the Haar measure of the hypergroup $(X_q, *_p)$, which was obtained in Theorem \ref{mainhypergroup} by 
  an orthogonality argument, can alternatively be calculated by using 
  Weil's integration  formula for Haar measures on hypergroups and their quotients 
(see \cite{Her}, \cite{V1}). In the same way as in the non-compact case treated in \cite{V9}, the Haar measure can thus be
obtained from the known Haar measure of the quotient $(X_q, *_p)/H$.
For integers $p\ge 2q$, the hypergroup $(X_q, \ast_p)$ can be identified with 
the double coset hypergroup 
$SU(p+q)//SU(p)\times SU(q),$ and its 
Haar measure therefore coincides by construction (see \cite{J}) with  the 
pushforward measure 
of the Haar measure on $SU(p+q)$ under the canonical projection
\[SU(p+q)\to SU(p+q)//SU(p)\times SU(q)\simeq X_q\,.\]

\smallskip\noindent
2. Concerning their structure, the hypergroups $(X_q, \ast_p)$ are also closely related to 
continuous family 
of commutative hypergroups
$(C_q\times \mathbb R, \ast_p)$ with $ p\geq  2q-1$ and  the $BC_q$-Weyl chamber
$\, C_q = \{(x_1, \ldots, x_q)\in \mathbb R^q: x_1 \geq \ldots \geq x_q\geq 0\}$
which were studied in \cite{V2}. For integral $p$, $(C_q\times \mathbb R, \ast_p)$ 
is an orbit hypergroup
under the action of $U(p)\times U(q)$ on the Heisenberg group
$M_{p,q}(\mathbb C)\times \mathbb R$. The characters are given in terms of 
multivariable Bessel- and Laguerre functions.
\end{remarks}

\section{Continuous product formulas for Heckman-Opdam Jacobi polynomials}

Fix the rank $q\ge1$ and a parameter  $p\in ]2q-1,\infty[$. Recall 
 that for  $l\in\mathbb Z$ the functions
\[\phi_{\lambda,l}^p([\cos t, z]) = z^l\cdot
\prod_{j=1}^q \cos^{|l|}\! t_j\cdot R_\lambda(k(p,q,l);t)\]
satisfy the product formula of Theorem \ref{general-twodim-prod-form}.
We shall now extend this fomula to exponents $l\in\mathbb R$ via Carlson's theorem, and
write it down as a product formula for the Jacobi polynomials 
$R_\lambda(k;t)$ with $k=k(p,q,l)$. This will work out smoothly only for
non-degenerate arguments $\,t, t^\prime\in A_q$ with $t_1, t_1^\prime \ne \pi/2$. Notice first
 that for a product formula for the Jacobi polynomials, we may restrict
our attention to $l\in[0,\infty[$ as $k(p,q,l)$ depends on $|l|$ only.
In the following, we shall use the abbreviation
\[d(t,t^\prime;v,w):= -\sin\underline t \,w \sin\underline t^\prime \,+\,\cos \underline t\, v \cos \underline t^\prime\,.\]
The main result of this section is

\begin{theorem}
Let  $q\ge1$ be an integer,  $p\in ]2q-1,\infty[$,  $l\in[0,\infty[$,
 and     $k=k(p,q,l)$. Then for all $\lambda\in P_+$ and 
 $\, t, t^\prime\in A_q$ with $t_1, t_1^\prime\ne \pi/2$, 
\begin{align}\label{prod-formel-allg-Jacobi}
R_\lambda(k; t) R_\lambda(k; t^\prime)\,
=\,\frac{1}{\kappa_p}\int_{B_q}\int_{SU(q)} &
R_\lambda\bigl(k;\arccos(\sigma_{sing}(d(t,t^\prime;v,w)))\bigr) \cdot\\
 &\cdot {\rm Re}\biggl[\biggl(\frac{\Delta(d(t,t^\prime;v,w))}{\Delta(\cos \underline
t)\cdot
\Delta(\cos \underline t^\prime)}\biggr)^{\!l}\,\biggr]
\cdot\Delta(I_q-w^*w)^{p-2q} dvdw.\notag
\end{align}
\end{theorem}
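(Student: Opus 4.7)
The plan is to derive \eqref{prod-formel-allg-Jacobi} first for non-negative integers $l$ by specializing Theorem \ref{general-twodim-prod-form}, and then to extend to real $l \in [0,\infty)$ via Carlson's theorem (Theorem \ref{continuation}), following the pattern of Theorem 4.1 in \cite{R2} and Theorem 3.2 in \cite{V9}.

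For the integer case I would fix $l \in \mathbb{Z}_{\geq 0}$ and specialize Theorem \ref{general-twodim-prod-form} to $z = z^\prime = 1$. The left-hand side factors as $\Delta(\cos\underline t)^l\,\Delta(\cos\underline t^\prime)^l\, R_\lambda(k;t) R_\lambda(k;t^\prime)$. In the integrand, the identities $\prod_j \sigma_{sing}(d)_j = |\Delta(d)|$ (product of singular values equals modulus of the determinant) and $\arg(\Delta(d))\cdot|\Delta(d)| = \Delta(d)$ (with the conventions $\arg 0 := 1$ and $0^l = 0$ for $l \geq 1$) collapse $\phi_{\lambda,l}^p([\sigma_{sing}(d),\arg\Delta(d)])$ to $\Delta(d(t,t^\prime;v,w))^l \cdot R_\lambda(k;\arccos\sigma_{sing}(d(t,t^\prime;v,w)))$. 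Since $t_1, t_1^\prime \ne \pi/2$ ensures $\Delta(\cos\underline t),\Delta(\cos\underline t^\prime) > 0$, I may divide both sides by the product of their $l$-th powers to obtain the right-hand side of \eqref{prod-formel-allg-Jacobi} but with $\zeta^l$ in place of ${\rm Re}[\zeta^l]$, where $\zeta := \Delta(d)/(\Delta(\cos\underline t)\Delta(\cos\underline t^\prime))$. Finally, the substitution $(v,w) \mapsto (\bar v, \bar w)$ preserves the measure on $SU(q)\times B_q$ and sends $d$ to $\bar d$; the integral is therefore invariant under complex conjugation, hence real-valued, and so coincides with its ${\rm Re}$-version. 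This proves \eqref{prod-formel-allg-Jacobi} for $l \in \mathbb{Z}_{\geq 0}$.

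To extend to real $l$, I would let $f(l)$ denote the difference of the two sides of \eqref{prod-formel-allg-Jacobi}, viewed in a complex neighbourhood of $\{{\rm Re}\,l \geq 0\}$, and verify Carlson's hypotheses. For holomorphicity, $R_\lambda(k(p,q,l);\cdot) = F_{BC_q}(\lambda+\rho(k(p,q,l)),k(p,q,l);i\,\cdot\,)$ depends holomorphically on $l$ since $k(p,q,l)$ is affine in $l$ and remains in $K^{reg}$, while ${\rm Re}[\zeta^l] = |\zeta|^l\cos(l\arg\zeta)$ (principal branch) is entire in $l$ for $\zeta\ne 0$; dominated convergence pushes holomorphicity through the integral. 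For the exponential bound $f(l) = O(e^{c|l|})$ with $c < \pi$, one combines the pointwise estimate $|{\rm Re}(\zeta^l)| \leq |\zeta|^{{\rm Re}\,l}\cosh(|{\rm Im}\,l|\cdot|\arg\zeta|)$ with the fact that the coefficients of the Jacobi polynomials $P_\lambda(k;\cdot) = c(\lambda+\rho(k),k)^{-1}R_\lambda(k;\cdot)$ are rational in $k$ (see \S 11 of \cite{M}), which forces polynomial growth in $l$ of the remaining factors. Carlson's theorem then forces $f \equiv 0$.

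The hard part will be securing the strict inequality $c < \pi$ in the exponential bound. The naive estimate $\cosh(|{\rm Im}\,l|\cdot|\arg\zeta|) \leq \cosh(\pi|{\rm Im}\,l|)$ yields exponent exactly $\pi$, which Carlson's theorem does not permit. Gaining a strictly smaller constant requires either controlling $|\arg\zeta|$ uniformly away from $\pi$ — that is, essentially ruling out $\Delta(d) \in \mathbb{R}_{<0}$ on sets of positive $(v,w)$-measure — or extracting a compensating cancellation from the $SU(q)$-averaging. This is precisely the ``nasty in detail'' technicality the authors allude to in the paragraph preceding Theorem \ref{general-twodim-prod-form}, and the requisite estimates would parallel those carried out in \cite{R2} Theorem 4.1 and \cite{V9} Theorem 3.2.
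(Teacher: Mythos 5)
Your proposal follows the paper's proof almost verbatim: the paper likewise specializes Theorem \ref{general-twodim-prod-form} to $z=z^\prime=1$ and integer $l\ge 0$, uses $\prod_j\sigma_{sing}(d)_j=|\Delta(d)|$ together with the convention for $\arg$ to collapse the integrand to $\Delta(d(t,t^\prime;v,w))^l\cdot R_\lambda(k;\arccos\sigma_{sing}(d))$, divides by $\bigl(\prod_j\cos t_j\cos t_j^\prime\bigr)^l>0$, and then invokes Carlson's theorem in the variable $l$, justifying the growth hypothesis by the rationality in $k$ of the coefficients of $P_\lambda(k;\cdot)$. The only structural difference is the order of operations: the paper applies Carlson to the intermediate identity \eqref{hilf-prod-f}, which still carries $\Delta(d)^l$ rather than its real part, and passes to real parts only at the very end, using that $R_\lambda(k;t)$ and hence the whole left-hand side are real for real $l$. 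Your conjugation argument $(v,w)\mapsto(\bar v,\bar w)$ for the reality of the integral at integer $l$ is correct but not needed on that route.

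Concerning the step you flag as ``the hard part'': the paper does not engage with it. Its entire justification of the Carlson growth condition is the rational dependence of the Jacobi coefficients on $k$, which controls the polynomial factors but says nothing about the bound $|\Delta(d)^l|\le|\Delta(d)|^{\mathrm{Re}\,l}e^{|\arg\Delta(d)|\,|\mathrm{Im}\,l|}$ when $\Delta(d)$ approaches the negative real axis --- which does occur on sets of positive $(v,w)$-measure, already for $q=1$ whenever $t+t^\prime>\pi/2$, so that the naive estimate only yields exponent $\pi$ exactly. Note also that the ``straightforward, but a bit nasty in detail'' remark you cite refers to the analytic continuation in $p$ (Theorem \ref{general-twodim-prod-form}), where the analogous factor $\Delta(I_q-w^*w)^{p-2q}$ has a nonnegative real base and no such argument issue arises; hence the appeal to Theorem 4.1 of \cite{R2} and Theorem 3.2 of \cite{V9} does not by itself supply the missing estimate for the continuation in $l$. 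In short, your write-up reproduces the paper's argument, and the one point you leave open is precisely the point the paper passes over in silence; you have been more explicit than the authors about where the hypothesis $c<\pi$ actually needs to be secured.
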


\begin{proof} For  $l\in \mathbb Z_+$, $z=z^\prime =1$ and  $t, t^\prime\in A_q$, the product
  formula in Theorem  \ref{general-twodim-prod-form} implies
\begin{align}\label{hilf-prod-f}
&\Bigl(\prod_{j=1}^q \cos t_j  \cos t_j^\prime\Bigr)^l \cdot 
 R_\lambda(k; t) R_\lambda(k; t^\prime)=\\
&=\frac{1}{\kappa_p}\int_{B_q}\int_{SU(q)}\Delta(d(t,t^\prime;v,w))^l \cdot 
 R_\lambda\bigl(k;\arccos(\sigma_{sing}(d(t,t^\prime;v,w)))\bigr)
\cdot\Delta(I_q-w^*w)^{p-2q} dv dw.
\notag
\end{align}
Our condition on $t,t^\prime$ assures that both sides of (\ref{hilf-prod-f}) are analytic in $l$ for $ {\rm
  Re} (l)> 0$.
 Moreover, by Section 11 of \cite{M}, the coefficients of the Jacobi polynomials 
 $R_\lambda(k;t)$ with respect to the exponentials  $e^{i\langle\mu,t\rangle}\,(\mu\in P_+)$
are 
rational in $k$. Therefore
both
sides of  (\ref{hilf-prod-f}) satisfy the growth condition of Carlson's
theorem. This implies that (\ref{hilf-prod-f}) remains correct for all
$l\in[0,\infty[$. As
$R_\lambda(k;t)$ is real for all $\lambda\in P_+$ and $t\in A_q$, the 
 claimed  product formula now follows from  (\ref{hilf-prod-f}) by taking real parts.
 
\end{proof}

Contrary to  the non-compact case in Section 5 of \cite{V9}, it seems to be
difficult in the present setting to derive positivity of the product formula \eqref{prod-formel-allg-Jacobi}
except for the known
case $l=0$.
This problem already appears in rank one, i.e. for $q=1$. We discuss this case  for illustration.

\begin{example}\label{1-dim-bsp-fortsetzung}
 Let $q=1$, $p\ge 2q-1= 1$, $l\in[0,\infty[$, and  $k=k(p,q,l)= (p-1-l, \frac{1}{2}+l).$
Resuming  the notions from Example \ref{q1--example-1}, we have
\[ \alpha=k_1+k_2-1/2=p-1>0,
\quad
 \beta=k_2-1/2=l.\]
Consider the classical (normalized) one-dimensional Jacobi polynomials
$R_n^{(\alpha,\beta)}$ with
\[R_n^{(\alpha,\beta)}(\cos 2\theta)=R_{2n}(k;\theta),\]
c.f. \eqref{ident-jacobi}.
With the identity $\cos 2\theta=2\cos^2\theta -1$, product formula
(\ref{prod-formel-allg-Jacobi})  becomes
\begin{align}\label{Jacobi_1} 
R_n^{(\alpha,\beta)}&(\cos 2\theta) R_n^{(\alpha,\beta)}(\cos 2\theta^\prime)
=\\
=&\frac{\alpha}{\pi} \int_0^1 \int_{-\pi}^{\pi} 
R_n^{(\alpha, \beta)} (2|b+are^{i\varphi}|^2 -1) \cdot 
 \frac{(b+are^{i\varphi})^\beta}{b^\beta} \cdot r(1-r^2)^{\alpha-1} dr d\varphi
\notag\end{align}
for $\theta,\theta^\prime\in [0,\pi/2[$, with   $\,a:= \sin \theta\sin\theta^\prime$
and $b: = \cos\theta\cos\theta^\prime>0$. 
Following Section 5 of \cite{K3}, we use the change of variables $(r,\phi)\mapsto (t,\psi) $ with
 \[ b+are^{i\varphi} = te^{i\psi} \text{ and }\, t\geq 0.\]
 Then  $a^2r\> dr\> d\phi= t\>
dt\> d\psi$ and 
identity \eqref{Jacobi_1} becomes,
 for $0< \theta,\theta^\prime < \pi/2$:
\begin{align}\label{Jacobi_2} 
R_n^{(\alpha,\beta)}&(\cos 2\theta) R_n^{(\alpha,\beta)}(\cos 2\theta^\prime)
=\\
=&\frac{\alpha}{\pi}\cdot\frac{1}{b^\beta a^{2\alpha}}
\int_0^\infty\int_{-\pi}^{\pi} R_n^{(\alpha,\beta)}(2t^2-1)
(te^{i\psi})^\beta
(a^2-b^2-t^2+2bt\cos\psi)_+^{\alpha-1} t\> dt\> d\psi\notag\\ 
=&\frac{\alpha}{\pi}\cdot\frac{2}{b^\beta a^{2\alpha}}\int_0^1
R_n^{(\alpha,\beta)}(2t^2-1) \Bigl(\int_{0}^{\pi} 
e^{i\beta\psi}(a^2-b^2-t^2+2bt\cos\psi)_+^{\alpha-1}d\psi\Bigr)t^{\beta+1} dt.
\notag\end{align}
Here the notation 
\[ (x)_+^\lambda = \begin{cases} x^\lambda & \text{ if $x>0$,}\\
                    0 & \text{ if $x\leq 0$}
                   \end{cases}\]
is used.
Notice for the last equality that $\,t=|b+are^{i\phi}|\le a+b\le 1$. Notice also
that 
for $\theta=0$ or  $\theta^\prime=0$, the product formula
degenerates in a trivial way due to $R_n^{(\alpha,\beta)}(1)=1$.

On the other hand, for $\alpha>\beta$, the Jacobi polynomials satisfy 
the well-known positive product formula (\cite{K1})
\begin{align}\label{Jacobi_3} 
R_n^{(\alpha,\beta)}&(\cos 2\theta)R_n^{(\alpha,\beta)}(\cos 2\theta^\prime) =\\
=&c_{\alpha,\beta}\int_0^1\int_0^\pi
R_n^{(\alpha,\beta)}(2|b+are^{i\varphi}|^2-1)\cdot 
(1-r^2)^{\alpha-\beta-1}r^{2\beta+1}\sin^{2\beta}\varphi\, dr d\varphi
\notag\end{align}
with some  constant $c_{\alpha,\beta}>0$. 
By the same substitution as above this can be brought into kernel form,
\begin{align}\label{Jacobi_4} 
R_n^{(\alpha,\beta)}&(\cos 2\theta)R_n^{(\alpha,\beta)}(\cos 2\theta^\prime) = \\
&=
\frac{c_{\alpha,\beta}}{a^{2\alpha}}\cdot 
\int_0^1 R_n^{(\alpha,\beta)}(2t^2-1) \Bigl(\int_{0}^\pi 
 (a^2-b^2-t^2+2bt\cos\psi)_+^{\alpha-\beta-1}\sin^{2\beta}\!\psi d\psi\Bigr)
t^{2\beta+1}dt.
\notag\end{align}
As the integrals in \eqref{Jacobi_2} and \eqref{Jacobi_4} are identical for all
$n$, we conclude that, for all $t\in [0,1]$,
\begin{equation} c_{\alpha,\beta}(tb)^\beta \!
\int_{0}^\pi(a^2-b^2-t^2+2bt\cos\psi)_+^{\alpha-\beta-1}\sin^{2\beta}\!\psi
d\psi \,=\, \frac{2\alpha}{\pi} \int_{0}^\pi e^{i\beta\psi} 
 (a^2-b^2-t^2+2bt\cos\psi)_+^{\alpha-1}d\psi.
\end{equation}
 This identity seems not obvious, and  it would be desirable 
 to have an elementary proof for it which  possibly might
 be extended to the higher rank case.
  \end{example} 

So far, a positive product formula such as formula
(\ref{Jacobi_3}) of Koornwinder  seems to be a difficult task in rank $q\geq 2$.  However, at
least the first step above in the case $q=1$, that is from \eqref{Jacobi_1} to
 \eqref{Jacobi_2}, can be partially extended to $q\ge 2$.
Indeed, consider the product formula (\ref{prod-formel-allg-Jacobi})
 for $t,t^\prime\in A_q$. We define the matrices 
 \[a_1:=\sin \underline t\,, \,   
a_2:=\sin \underline t^\prime\,,\,  b= b(v):= \cos \underline t\, v \cos \underline t^\prime\,  \in M_q(\mathbb C)\]
 and consider
 the polar decomposition $\,b-a_1wa_2=: ru\,$ with positive semidefinite
  $r\in M_q(\mathbb C)$ and $u\in U(q)$. We now carry out the change of variables 
 in two steps.
First, for $\,\widetilde w:= a_1wa_2\in M_q(\mathbb C)$, we have
$\,d\widetilde w= const\cdot\Delta(a_1a_2)^{2q}\> dw$. Moreover, the polar decomposition
$\,b-\widetilde w=\sqrt{r}u\,$ (for non-singular $r$) leads to 
$\,d\widetilde w= const\cdot drdu,$  where  $dr$ means integration with respect to the
Lebesgue measure on the cone $\Omega_q$ of positive definite matrices 
as an open subset of the vector space of all Hermitian matrices, and $du$ is the
normalized Haar measure of $U(q)$; see Proposition XVI.2.1 of \cite{FK}. Formula
 (\ref{prod-formel-allg-Jacobi}) now reads 
\begin{align}\label{prod-formel-allg-Jacobi-kern}
R_\lambda(k; t)R_\lambda(k; t^\prime)= & \notag\\
=\,const\cdot\int_{\Omega_q} &
R_\lambda\bigl(k;\arccos(\sigma_{sing}(\sqrt{r}))\bigr) \cdot
\frac{\Delta(r)^{l/2}}{\Delta(\cos \underline t)^l\Delta(\cos \underline t^\prime)^l
\Delta(\sin \underline t)^{2q}\Delta(\sin \underline t^\prime)^{2q}}\cdot
\notag\\
&\cdot\Bigl(\int_{SU(q)}\int_{U(q)} 
\Delta\bigl(H(t,t^\prime,r,u,v)_+\bigr)
\Delta(u)^l\,dv du\Bigr) dr
\end{align}
where 
$$H(t,t^\prime,r,u,v)=I_q \,-\, a_2^{-1}\bigl(b(v)^*-u^*\sqrt{r}\bigr) a_1^{-2}
\bigl(b(v)-\sqrt{r}u\bigr) a_2^{-1}$$
and the subscript $+$ means that this term is put zero for matrices which
 are not positive definite. The analysis of the origin of these
 formulas shows that in the outer integral, $r$ in fact runs through the
 set  $\{r\in\Omega_q:\> I_q-r >0\}$. 
One may speculate that (\ref{prod-formel-allg-Jacobi-kern})
 can be brought  into a kernel form 
by replacing the
integration over $r\in \Omega_q$ by an integration over the spectrum of $r$ as a
subset of $\{\rho\in\mathbb R^q:\> 1 \ge\rho_1\ge\cdots\ge\rho_q\ge 0\}$.

\end{document}